\newtheorem{remark}[theorem]{Remark}
\newcommand\sinc{\sin\!{\rm{c}}}
\newcommand\e{\mathrm{e}}
\newcommand{\R}{\mathbb{R}}
\newcommand{\bigo}[1]{\mathcal{O}(#1)}
\providecommand{\norm}[1]{\ensuremath{\lVert#1\rVert}}
\providecommand{\Norm}[1]{\ensuremath{\Big\lVert#1\Big\rVert}}
\title{A trigonometric method for the linear stochastic wave equation}
\author{David Cohen\thanks{Institut f\"ur Angewandte und Numerische Mathematik, 
              Karlsruher Institut f\"ur Technologie,
              DE-76128 Karlsruhe, Germany.
              {\tt david.cohen@kit.edu}.
              Present address: Matematik och matematisk statistik, 
              Ume{\aa} universitet, SE-90187 Ume{\aa}, 
              Sweden. {\tt david.cohen@math.umu.se}} 
       \and Stig Larsson\thanks{Department of Mathematical Sciences, 
              Chalmers University of Technology and University 
              of Gothenburg, SE-412 96 Gothenburg, Sweden.
              {\tt stig@chalmers.se}} 
       \and Magdalena Sigg\thanks{Mathematisches Institut, Universit\"at Basel, 
              CH-4051 Basel, Switzerland.
              {\tt Magdalena.Sigg@unibas.ch}}
         }
\begin{document}

\maketitle

\begin{abstract}
A fully discrete approximation of the linear 
stochastic wave equation driven by additive noise is 
presented. A standard finite element method 
is used for the spatial discretisation 
and a stochastic trigonometric scheme for the 
temporal approximation. 
This explicit time integrator allows 
for error bounds independent of the space 
discretisation and thus do not have a step size restriction as 
in the often used St\"ormer-Verlet-leap-frog scheme. 
Moreover it enjoys a trace formula as does the exact solution 
of our problem. These favourable properties are 
demonstrated with numerical experiments.
\end{abstract}

\begin{keywords}
Stochastic wave equation, Additive noise,  
Strong convergence, Trace formula, Stochastic 
trigonometric schemes, Geometric numerical integration
\end{keywords}

\begin{AMS}
65C20, 60H10, 60H15, 60H35, 65C30
\end{AMS}

\pagestyle{myheadings}
\thispagestyle{plain}
\markboth{D. Cohen and S. Larsson and M. Sigg}{A trigonometric method for the stochastic wave equation}


\section{Introduction}\label{sect:intro}

We consider the numerical discretisation of the 
linear stochastic wave equation with additive noise
\begin{equation}
\begin{aligned}
&\mathrm{d}\dot{u} - \Delta u\, \mathrm{d}t = \mathrm{d}W &&\quad \mathrm{in} \ 
\mathcal{D}\times(0,\infty), \\
&u = 0 &&\quad \mathrm{in} \ \partial\mathcal{D}\times(0,\infty), \\
&u(\cdot,0) = u_0, \ \dot{u}(\cdot,0)=v_0 &&\quad \mathrm{in} \ \mathcal{D},
 \end{aligned}
\label{swe}
\end{equation}
where $u=u(x,t)$, $\mathcal{D}\subset \mathbb{R}^d$, $d=1,2,3$, is a
bounded convex domain with polygonal boundary $\partial\mathcal D$,
and the dot ``$\cdot$'' stands for the time derivative.  The
stochastic process $\{W(t)\}_{t\geq 0}$ is an
$L_2(\mathcal{D})$-valued $Q$-Wiener process with respect to a
normal filtration $\{\mathcal{F}_t\}_{t\geq 0}$ on a filtered
probability space $(\Omega,\mathcal{F},\mathbb{P},\{\mathcal{F}_t\}_{t\geq 0})$. 
The initial data $u_0$ and $v_0$ are $\mathcal{F}_0$-measurable random
variables. We will numerically solve this problem with a finite
element method in space \cite{kls}
and a stochastic trigonometric method in time \cite{c10} 
and \cite{cs11} (see Section~\ref{sect:trigo}).

There are many reasons to study stochastic wave equations.  Let us
mention the motion of a suspended cable under wind loading
\cite{dsm03}; the motion of a strand of DNA in a liquid
\cite{dkmnx09}; or the motion of shock waves on the surface of the sun
\cite{dkmnx09}.  All these stochastic partial differential equations
are of course nonlinear and highly nontrivial. But in order to derive
efficient numerical schemes, we first look at model problems
like \eqref{swe}.  

The numerical analysis of the stochastic wave equation is only in its
beginning in comparison with the numerical analysis of parabolic
problems. We refer to \cite{cao07} and \cite{sch08} for spectral-type 
(spatial) discretisations of our stochastic partial differential 
equation and to the introduction of \cite{kls} 
for other types of spatial discretisations. 
We now comment on works dealing with the time
discretisation of \eqref{swe}. Strong convergence estimates for 
implicit one-step methods can be found in \cite{KLLweaktime}, despite 
the main theme of the paper which is weak convergence. 
Both for spatial and temporal approximation the order of convergence is found 
to be somewhat lower than the order of regularity, see Remark~\ref{order-remark} below.   
In \cite{w06} the leap-frog scheme is
applied to the nonlinear stochastic wave equation with space-time
white noise on the whole line. A strong convergence rate
$\bigo{h^{1/2}}$ is proved, where $h$ is the step size in both time
and space, which is in agreement with the order of regularity in this case. 
The reason for this is that the Green's functions of
the continuous and the discrete problems coincide at mesh points. A
similar trick is also used in \cite{mpw02} and \cite{mpw03} to derive
an ``exact'' solver. 
Let us finally mention the work \cite{klns11}, 
where error bounds in the $p$-th mean 
for general semilinear stochastic 
evolution equations are presented. 
The authors consider a Fourier Galerkin discretisation 
in space and the exponential Euler scheme in time. 
This exponential time integrator 
(see also \cite{ho10}, \cite{jk09}, \cite{lr04} and 
references therein) is, 
in the linear case, precisely 
the one that we use \cite{cs11}. 

The paper is organised as follows. 
Some preliminaries and the main results 
from \cite{kls} on strong 
convergence estimates for the finite 
element approximation of our problem 
are presented in Section~\ref{sect:space}. 
The stochastic trigonometric scheme is introduced 
in Section~\ref{sect:trigo} and a convergence analysis 
is carried out in Section~\ref{sect:strong}.  
A trace formula for the numerical 
integrator is obtained in Section~\ref{sect:trace} and 
finally in Section~\ref{sect:numexp} 
numerical experiments demonstrate 
the efficiency of our discretisation.

\section{A finite element approximation of the stochastic wave equation}\label{sect:space}

Before we can state the main result on the finite element approximation 
of \cite{kls}, 
we must define the spaces, norms and notations we will need. 
Let $U$ and $H$ be separable Hilbert spaces with 
norms $\norm{\cdot}_U$, resp. $\norm{\cdot}_H$. 
$\mathcal{L}(U,H)$ denotes the space of bounded linear operators 
from $U$ to $H$ and $\mathcal{L}_2(U,H)$ the space of Hilbert-Schmidt operators with norm
$$ \norm{T}_{\mathcal{L}_2(U,H)}:=\Big(\sum_{k=1}^\infty \norm{Te_k}_H^2\Big)^{1/2}, $$
where $\{e_k\}_{k=1}^\infty$ is an orthonormal basis of $U$. If $H=U$,
then $\mathcal{L}(U)=\mathcal{L}(U,U)$ and
$\mathrm{HS}=\mathcal{L}_2(U,U)$.  Furthermore, if
  $(\Omega,\mathcal{F},\mathbb{P},\{\mathcal{F}_t\}_{t\geq 0})$ is a
  filtered probability space, then $L_2(\Omega,H)$ is the space of
$H$-valued square integrable random variables with norm
$$ 
\norm{v}_{L_2(\Omega,H)}=\mathbb{E}[\norm{v}_H^2]^{1/2}. 
$$
Let $Q\in\mathcal L(U)$ be a self-adjoint, positive  
semidefinite operator. 
The driving stochastic process $W(t)$ in \eqref{swe} is a $U$-valued $Q$-Wiener process 
with respect to the filtration $\{\mathcal F_t\}_{t\geq0}$ and has the orthogonal 
expansion \cite[Section 2.1]{pr07}
\begin{equation}\label{fnoise}
W(t)=\sum_{j=1}^\infty\gamma_j^{1/2}\beta_j(t)e_j,
\end{equation}
where $\{(\gamma_j,e_j)\}_{j=1}^\infty$ are eigenpairs of $Q$ with orthonormal eigenvectors 
and $\{\beta_j(t)\}_{j=1}^\infty$ are real-valued mutually independent
standard Brownian motions. It is then possible to define the
stochastic integral  $\int_0^t\Phi(s)\,\mathrm{d}W(s)$ together with
It\^o's isometry, \cite{pr07}:
\begin{equation}
\mathbb{E}\Big[\Big\lVert\int_0^t\Phi(s)\,\mathrm{d}W(s)\Big\rVert_H^2\Big] 
= \int_0^t\norm{\Phi(s)Q^{1/2}}_{\mathcal{L}_2(U,H)}^2\,\mathrm{d}s,
\label{ito}
\end{equation}
where $\Phi:[0,\infty)\rightarrow \mathcal{L}(U,H)$ is such that the right side is finite. 

For the stochastic wave equation \eqref{swe}, we define $U=L_2(\mathcal{D})$ 
and $\Lambda = -\Delta$ with $D(\Lambda) = H^2(\mathcal{D})\cap H_0^1(\mathcal{D})$. 
We assume that the covariance operator $Q$ of $W$ satisfies  
\begin{equation} \label{Qassumption}
\norm{\Lambda^{(\beta-1)/2}Q^{1/2}}_\mathrm{HS}<\infty
\end{equation}
for some $\beta\ge0$ and with the Hilbert-Schmidt norm defined above. 
If $Q$ is of trace class, i.\,e., $\mathrm{Tr}(Q)=\norm{Q^{1/2}}_{\mathrm{HS}}^2<\infty$, 
then $\beta=1$. If $Q=\Lambda^{-s}$, $s\geq0$, then $\beta<1+s-d/2$. This 
follows from the asymptotic behaviour of the eigenvalues 
of $\Lambda$, $\lambda_j\sim j^{2/d}$. 
In particular, if $Q=I$, then $\beta<\frac{1}{2}$ and $d=1$. Note that
we do not assume that $\Lambda$ and $Q$ have a common eigenbasis.

We will use the spaces $\dot{H}^\alpha = D(\Lambda^{\alpha/2})$ for $\alpha\in\mathbb{R}$. 
The corresponding norm is given by
$$ 
\norm{v}_\alpha := \norm{\Lambda^{\alpha/2}v}_{L_2(\mathcal{D})}
=
\Big(\sum_{j=1}^\infty\lambda_j^\alpha(v,\varphi_j)_{L_2(\mathcal{D})}^2\Big)^{1/2}, 
$$
where $\{(\lambda_j,\varphi_j)\}_{j=1}^\infty$ are the eigenpairs of $\Lambda$ 
with orthonormal eigenvectors. We also write 
$H^\alpha=\dot H^\alpha\times \dot H^{\alpha-1}$ 
and $H=H^0=\dot H^0\times \dot H^{-1}$.

We use a standard piecewise linear finite element method for the 
spatial discretisation. Let $\{\mathcal{T}_h\}$ be a quasi-uniform 
family of triangulations of $\mathcal{D}$ with $h_K = \operatorname{diam}(K)$, 
$h=\max_{K\in\mathcal{T}_h}h_K$, and denote by $V_h$ the space of 
piecewise linear continuous functions with respect to 
$\mathcal{T}_h$ which vanish on $\partial\mathcal{D}$. 
Hence, $V_h\subset H^1_0(\mathcal{D})=\dot{H}^1$.

We introduce discrete variants of $\norm{\cdot}_\alpha$ and $\dot{H}^\alpha$:
$$ \norm{v_h}_{h,\alpha} =
\norm{\Lambda_h^{\alpha/2}v_h}_{L_2(\mathcal{D})}, \ \ v_h\in V_h;
\quad \dot{H}_h^\alpha=V_h \ \text{equipped with}\ \norm{\cdot}_{h,\alpha},
$$
where $\Lambda_h:V_h\rightarrow V_h$ is the discrete Laplace operator
defined by 
\begin{align*}
(\Lambda_hv_h,w_h)_{L_2(\mathcal{D})}=(\nabla v_h,\nabla w_h)_{L_2(\mathcal{D})},\quad\forall w_h \in V_h.
\end{align*}

Denoting the velocity of the solution by $u_2:=\dot u_1:=\dot u$, 
one can rewrite \eqref{swe} as
\begin{equation}
\begin{split}
& \mathrm{d}X(t) = AX(t)\,\mathrm{d}t+B\,\mathrm{d}W(t), \ \ t>0, \\
& X(0) = X_0,
\label{swe2}
\end{split}
\end{equation}
where $ A := \begin{bmatrix} 0 & I \\ -\Lambda & 0 \end{bmatrix}$,
$B:=\begin{bmatrix} 0 \\ I \end{bmatrix}$, $X := \begin{bmatrix} u_1
  \\ u_2 \end{bmatrix}$ and $X_0 := \begin{bmatrix} u_0 \\
  v_0 \end{bmatrix}$. The operator $A$ with $D(A)=H^1=\dot
  H^1\times \dot H^{0}$ is the generator of a strongly continuous
semigroup of bounded linear
operators $E(t)=\e^{tA}$ on $H^0=\dot{H}^0\times \dot{H}^{-1}$, in fact, 
a unitary group. 

Let $\mathcal{P}_h:\dot{H}^0\rightarrow V_h$ and $\mathcal{R}_h:\dot{H}^1\rightarrow V_h$ 
denote the orthogonal projectors onto the finite element space $V_h\subset H_0^1(\mathcal{D}) = \dot{H}^1$, 
where we recall that $V_h$ is the space of piecewise linear continuous functions. 
The finite element approximation of \eqref{swe} can then be written as
\begin{equation}
\begin{split}
& \mathrm{d}\dot{u}_{h,1}(t) + \Lambda_h u_{h,1}(t)\,\mathrm{d}t = \mathcal{P}_h\,\mathrm{d}W(t), \ \ t>0, \\
& u_{h,1}(0)=u_{h,0}, \ u_{h,2}(0)=v_{h,0},
\label{femswe1}
\end{split}
\end{equation}
or in the abstract form 
\begin{equation}
\begin{split}
& \mathrm{d}X_h(t) = A_hX_h(t)\,\mathrm{d}t+\mathcal{P}_hB\,\mathrm{d}W(t), \ \ t>0, \\
& X_h(0) = X_{h,0},
\label{femswe2}
\end{split}
\end{equation}
where $ A_h := \begin{bmatrix} 0 & I \\ -\Lambda_h & 0 \end{bmatrix}$, 
$X_h := \begin{bmatrix} u_{h,1} \\ u_{h,2} \end{bmatrix}$ and 
$X_{h,0} := \begin{bmatrix} u_{h,0} \\ v_{h,0} \end{bmatrix}$ 
with $u_{h,0}, v_{h,0}\in V_h$. 
Again, $A_h$ is the generator of a $C_0$-semigroup $E_h(t)=\e^{tA_h}$ 
on $V_h\times V_h$. 

It is known, see, e.\,g., \cite[Example 5.8]{dz92} and
\cite{kls}, that under assumption \eqref{Qassumption} the linear
stochastic wave equation \eqref{swe2} has a unique weak solution given
by
\begin{equation}
X(t) = E(t)X_0 + \int_0^tE(t-s)B\,\mathrm{d}W(s),
\label{exactsol}
\end{equation}
with mean-square regularity of order $\beta$, 
\begin{align}   \label{weaksolutionregularity}
  \lVert X(t)\rVert_{L_2(\Omega,H^\beta)}
    \le C \Big(\lVert X_0\rVert_{L_2(\Omega,H^\beta)}+t^{1/2}
      \lVert\Lambda^{(\beta-1)/2}Q^{1/2}\rVert_{\mathrm{HS}}\Big),\quad t\ge 0.
\end{align}
Similarly, the unique solution of the finite element problem \eqref{femswe2} is given by 
\begin{equation}
X_h(t) = E_h(t)X_{h,0} + \int_0^tE_h(t-s)\mathcal{P}_hB\,\mathrm{d}W(s).
\label{exactsolfem}
\end{equation}

We quote the following theorem on the convergence of the spatial approximation.

\begin{theorem}[Theorem~5.1 in \cite{kls}]\label{thm:fem} 
Assume that $Q$ satisfies \eqref{Qassumption} for some $\beta\in[0,4]$. 
Let $X_0=[u_0,v_0]^T\in H^\beta=\dot
  H^\beta\times\dot H^{\beta-1}$, $X = [u_1,u_2]^T$ and
  $X_h=[u_{h,1},u_{h,2}]^T$ be given by \eqref{exactsol} and
  \eqref{exactsolfem}, respectively.  Then the following estimates
  hold for $t\geq 0$, where $C(t)$ is an increasing function of the
  time $t$.
\begin{itemize}[leftmargin=*]
\item If $u_{h,0} = \mathcal{P}_hu_0$, $v_{h,0} = \mathcal{P}_hv_0$ and $\beta\in[0,3]$, then
\begin{equation*}
\norm{u_{h,1}(t)-u_1(t)}_{L_2(\Omega,\dot{H}^0)} \leq C(t)h^{\frac{2}{3}\beta} \big\{\norm{X_0}_{L_2(\Omega,H^\beta)} + 
\norm{\Lambda^{\frac{1}{2}(\beta-1)}Q^{\frac12}}_{\mathrm{HS}}\big\}.
\end{equation*}
\item If $u_{h,0} = \mathcal{R}_hu_0$, $v_{h,0} = \mathcal{P}_hv_0$ and $\beta\in[1,4]$, then
\begin{equation*}
\norm{u_{h,2}(t)-u_2(t)}_{L_2(\Omega,\dot{H}^0)} \leq C(t)h^{\frac23(\beta-1)} \big\{\norm{X_0}_{L_2(\Omega,H^\beta)} 
+ \norm{\Lambda^{\frac12(\beta-1)}Q^{\frac12}}_{\mathrm{HS}}\}.
\end{equation*}
\end{itemize}
\end{theorem}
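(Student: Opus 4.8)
The plan is to reduce the fully discrete error to deterministic finite element estimates for the cosine and sine operator families and then to handle the stochastic convolution by It\^o's isometry \eqref{ito}. Since $\Lambda$ is self-adjoint positive definite, the group $E(t)=\e^{tA}$ splits componentwise through $C(t):=\cos(t\Lambda^{1/2})$ and $S(t):=\Lambda^{-1/2}\sin(t\Lambda^{1/2})$, with discrete analogues $C_h(t),S_h(t)$ built from $\Lambda_h$. Writing $F^C_h(t):=C_h(t)\mathcal P_h-C(t)$ and $F^S_h(t):=S_h(t)\mathcal P_h-S(t)$ and using $B=[0,I]^T$, the displacement error reads
$$
u_{h,1}(t)-u_1(t)=F^C_h(t)u_0+F^S_h(t)v_0+\int_0^tF^S_h(t-s)\,\mathrm dW(s),
$$
with $\mathcal P_h$ replaced by $\mathcal R_h$ in the first operator when the Ritz projection is used for $u_0$; the velocity error has the analogous form with $F^S_h$ replaced by $F^C_h$ in the convolution and with a one-derivative-rougher sine operator acting on $u_0$. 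Thus the whole problem rests on a single family of deterministic operator estimates.

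The technical core is the nonsmooth-data estimate for the cosine family,
$$
\norm{F^C_h(t)v}_0\le C(t)\,h^{\frac23\rho}\norm{v}_\rho,\qquad 0\le\rho\le3,
$$
together with its companions for the sine family, which gain (resp. lose) one order of regularity because of the extra factor $\Lambda^{-1/2}$ (resp. $\Lambda^{1/2}$); concretely $\norm{F^S_h(t)v}_0\le C(t)h^{\frac23(\rho+1)}\norm{v}_\rho$. I would prove the cosine estimate by establishing its two endpoints and interpolating on the scale $\{\dot H^\rho\}$. At $\rho=0$ only stability is needed, which follows from $\norm{C(t)}_{\mathcal L(\dot H^0)}\le1$ and the uniform boundedness of $C_h(t)$ on $V_h$. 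At $\rho=3$ one needs the optimal-order bound $\norm{F^C_h(t)v}_0\le C(t)h^2\norm{v}_3$; here I would use the Ritz projector $\mathcal R_h$, for which $\norm{(\mathcal R_h-I)w}_0\le Ch^2\norm{w}_2$, write the semidiscrete error equation for $\ddot u+\Lambda u=0$, and integrate it by Duhamel's principle, estimating the defect $(\Lambda_h\mathcal R_h-\mathcal P_h\Lambda)$. The extra derivative---so that $O(h^2)$ requires $\dot H^3$ rather than $\dot H^2$---is the well-known hyperbolic loss and is exactly what produces the slope $\tfrac23$.

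With the lemma in hand the homogeneous terms are immediate: for the displacement, $\norm{F^C_h(t)u_0}_0+\norm{F^S_h(t)v_0}_0\le C(t)h^{\frac23\beta}(\norm{u_0}_\beta+\norm{v_0}_{\beta-1})$, using $u_0\in\dot H^\beta$, $v_0\in\dot H^{\beta-1}$; the choice of $\mathcal R_h$ versus $\mathcal P_h$ for $u_0$ is dictated by removing the leading projection defect in the component being measured. For the stochastic term I would apply \eqref{ito}, which gives
$$
\E\Bigl[\Norm{\int_0^tF^S_h(t-s)\,\mathrm dW(s)}_0^2\Bigr]
=\int_0^t\norm{F^S_h(t-s)Q^{1/2}}_{\mathcal L_2(U,\dot H^0)}^2\,\mathrm ds .
$$
Inserting $\norm{F^S_h(\tau)v}_0\le C(\tau)h^{\frac23\beta}\norm{v}_{\beta-1}$ inside the Hilbert-Schmidt norm and recognising $\norm{v}_{\beta-1}=\norm{\Lambda^{(\beta-1)/2}v}_0$ turns the integrand into $C(\tau)^2h^{\frac43\beta}\norm{\Lambda^{(\beta-1)/2}Q^{1/2}}_{\mathrm{HS}}^2$; integrating in $s$ yields the first bullet. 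The velocity estimate follows identically, with the cosine operator in the convolution and regularity shifted down by one, giving $h^{\frac23(\beta-1)}$ and the range $\beta\in[1,4]$.

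The main obstacle is the endpoint estimate at $\rho=3$: proving the sharp $O(h^2)$ bound for the wave-equation finite element error in $L_2$ while keeping the displacement and velocity components separate, so that the velocity estimate loses exactly one order of regularity. This rests on the sharp approximation properties of $\mathcal R_h$ and $\mathcal P_h$ and on a careful Duhamel analysis of the defect $(\Lambda_h\mathcal R_h-\mathcal P_h\Lambda)$, and it is here that the hyperbolic loss---and hence the factor $\tfrac23$---genuinely enters. Once both endpoints are secured, the interpolation and the It\^o reduction are routine, as is tracking the growth of $C(t)$ coming from the Duhamel integral and the factor $t^{1/2}$ in \eqref{ito}.
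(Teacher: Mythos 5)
Your proposal is essentially the proof of this theorem as given in \cite{kls} (the present paper does not prove Theorem~\ref{thm:fem} at all, but imports it verbatim as Theorem~5.1 of \cite{kls}): split the error into deterministic error operators for the cosine and sine families, prove nonsmooth-data estimates of order $h^{\frac23\rho}$ by interpolating between stability and the optimal-order $\bigo{h^2}$ endpoint (where the hyperbolic loss of one derivative produces the exponent $\frac23$, and where the sine family gains one derivative because its scale is shifted by $\Lambda^{-1/2}$), and then treat the stochastic convolution with It\^o's isometry \eqref{ito}, pulling the deterministic bound inside the Hilbert--Schmidt norm to produce $\norm{\Lambda^{(\beta-1)/2}Q^{1/2}}_{\mathrm{HS}}$. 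One small correction to your endpoint sketch: the defect $\Lambda_h\mathcal{R}_h-\mathcal{P}_h\Lambda$ you propose to estimate vanishes identically (this is the defining property of the Ritz projection, $(\nabla\mathcal{R}_hu,\nabla\chi)=(\nabla u,\nabla\chi)$), which is exactly what reduces the error equation to $\ddot\theta+\Lambda_h\theta=-\mathcal{P}_h\ddot\rho$ with $\rho=(\mathcal{R}_h-I)u$ and lets a single integration by parts in the Duhamel integral deliver $\bigo{h^2}$ under $u_0\in\dot H^3$, $v_0\in\dot H^2$, as required.
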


\begin{remark}\label{order-remark} 
{\it Note that the order of convergence in the position, $\frac23\beta$, is lower than
  the order of regularity, $\beta$, in \eqref{weaksolutionregularity}.
  This is a known feature of the finite element method for the wave
  equation, see \cite{kls}. The upper limits for $\beta$ are only
  dictated by the fact that the maximal order for piecewise linear
  approximation is $2$; higher regularity will not yield higher rate
  of convergence unless higher order finite elements are used, which
  can be done of course, see \cite{kls}. Similarly, it is shown in 
\cite[Theorem 4.1]{KLLweaktime} that the order of convergence 
of implicit one-step temporal approximations is $\bigo{k^{  \min( \beta \frac{p}{p+1} , 1) }}$, 
where $k$ is the steplength and $p$ is the order of the method. 
Thus, $p=1$ and $p=2$ for the backward Euler-Maruyama and Crank-Nicolson-Maruyama methods, 
respectively. 
}
\end{remark}

We will also use the following relation between $\Lambda_h$ and $\Lambda$, 
see the proof of Theorem~4.4 in \cite{kll},
\begin{equation}
\norm{\Lambda_h^\alpha\mathcal{P}_h\Lambda^{-\alpha}v}_{L_2(\mathcal{D})}^2\leq \norm{v}_{L_2(\mathcal{D})}^2, 
\ \ \alpha\in[-\tfrac12,1], \ \ v\in \dot{H}^0 = L_2(\mathcal{D}),
\label{lpl}
\end{equation}
where $\mathcal{P}_h$ is the orthogonal projector $\mathcal{P}_h:\dot
H^0\to V_h$. 

Finally, we remark that the assumption that $\mathcal{D}$ is convex and polygonal guarantees that the
triangulations can be exactly fitted to $\partial\mathcal{D}$ and that we have
the elliptic regularity $\lVert v\rVert_{H^2(\mathcal{D})} \le C\lVert \Lambda
v\rVert_{L_2(\mathcal{D})}$ for $v\in D(\Lambda)$.  This simplifies the error analysis of
the finite element method. The assumption of quasi-uniformity
guarantees that we have an inverse inequality and is only used in the
proof of the case $\alpha\in[0,\frac12]$ of \eqref{lpl}. In
particular, it is not needed for the proof of Theorem~\ref{thm:fem} 
and not for the case $\beta=1$ (trace class noise) in the error analysis 
in Theorem~\ref{thm:stm} below.  

\section{A stochastic trigonometric method for the discretisation in time}\label{sect:trigo}
In order to discretise efficiently the 
finite element problem \eqref{femswe1}, 
or \eqref{femswe2}, in time 
one is often interested in using 
explicit methods with large step sizes. 
A standard approach for the 
deterministic case is the leap-frog scheme, but 
unfortunately one has a step-size restriction due to stability issues. 
In the present paper, we will consider a stochastic extension 
of the trigonometric methods. The trigonometric methods are particularly well 
suited for the numerical discretisation of second-order differential 
equations with highly oscillatory solutions, 
see \cite[Chapter XIII]{hlw} for more details. 
As stated above, the exact solution of 
\eqref{femswe2} is found by the variation-of-constants formula and given by \eqref{exactsolfem}. 
We can write $E_h(t)$ as
\begin{equation}
E_h(t) = \begin{bmatrix} C_h(t) & \Lambda_h^{-1/2}S_h(t) \\ -\Lambda_h^{1/2}S_h(t) & C_h(t) \end{bmatrix}
\label{E_h}
\end{equation}
with $C_h(t) = \cos(t\Lambda_h^{1/2})$ and $S_h(t) = \sin(t\Lambda_h^{1/2})$. 
Discretising the stochastic integral in the sense of It\^o, that is, evaluating 
the integrand at the left-end point of the interval, 
leads us to the stochastic trigonometric method. We let $k$ be the 
time step size and $U_1^0 = u_{h,0}$ and $U_2^0 = v_{h,0}$, and 
obtain the numerical scheme 
$U^{n+1}=E_h(k)U^n+E_h(k)\mathcal{P}_hB\Delta W^n$, that is, 
\begin{equation}
\begin{bmatrix} U_1^{n+1} \\ U_2^{n+1} \end{bmatrix} = 
\begin{bmatrix} C_h(k) & \Lambda_h^{-1/2}S_h(k) \\ -\Lambda_h^{1/2}S_h(k) & C_h(k) \end{bmatrix} 
\begin{bmatrix} U_1^n \\ U_{2}^n \end{bmatrix} + \begin{bmatrix} \Lambda_h^{-1/2}S_h(k) \\ C_h(k) \end{bmatrix}
\mathcal{P}_h\Delta W^n,
\label{stm}
\end{equation}
where $\Delta W^n=W(t_{n+1})-W(t_n)$ denotes the Wiener
increments. Here we thus get an approximation $U_j^n\approx
u_{h,j}(t_n)$ of the
exact solution of our finite element problem at the discrete times $t_n=nk$. 

\begin{remark} 
{\it 
The stochastic trigonometric methods \eqref{stm}
    are easily adapted to the numerical time discretisation of
    ($N$-dimensional) systems of nonlinear stochastic differential
    equations of the form $$ \ddot X(t) + \omega^2X(t) = G(X(t)) +\dot
    W(t), $$ where $\omega\in\R^{N\times N}$ is a symmetric positive
    definite matrix and $G(x)\in\R^N$ is a smooth nonlinearity.  In
    this case, one obtains the following explicit numerical scheme
    \cite{cs11} 
\begin{align}\label{trigo-nl} 
      \begin{split}
       \begin{bmatrix}
        X^{n+1}_1\\
        X^{n+1}_2 \end{bmatrix} &= \begin{bmatrix}
        \cos(k\omega) & \omega^{-1}\sin(k\omega)\\
        -\omega \sin(k\omega) &
        \cos(k\omega) \end{bmatrix} \begin{bmatrix}
        X^n_1\\
        X^n_2 \end{bmatrix} 
       \\ &\quad+ \begin{bmatrix}
        \frac{k^2}{2}\Psi G(\Phi X_1^n)\\
        \frac{k}{2}\bigl(\Psi_0G(\Phi X_1^{n})+ \Psi_1G(\Phi
        X^{n+1}_1)\bigr) \end{bmatrix}
        + \begin{bmatrix}
        \omega^{-1}\sin(k\omega)\\
        \cos(k\omega) \end{bmatrix} \Delta W^n, 
    \end{split}
\end{align} where $k$
    denotes the step size and $\Delta
    W^n=W(t_{n+1})-W(t_n)$ the Wiener increments.  Here
    $\Psi=\psi(k\omega)$ and $\Phi=\phi(k\omega)$, where the filter
    functions $\psi, \phi$ are even, real-valued functions with
    $\psi(0)=\phi(0)=1$.  
    Moreover, we have $\Psi_0=\psi_0(k\omega)$,
    $\Psi_1=\psi_1(k\omega)$ with even functions $\psi_0,\psi_1$
    satisfying $\psi_0(0)=\psi_1(0)=1$.  The purpose of these filter
    functions is to attenuate numerical resonances. Moreover, the
    choice of the filter functions may also have a substantial
    influence on the long-time properties of the method, see
    \cite[Chapter XIII]{hlw} for the deterministic case. We will not
    deal with these issues in the present paper.

Numerical experiments for the nonlinear stochastic wave equation
$$ 
\mathrm{d}\dot{u} - \Delta u\, \mathrm{d}t = G(u)\,\mathrm{d}t+\mathrm{d}W 
$$
with a smooth nonlinearity $G$ will be provided in Section~\ref{sect:numexp} 
in order to demonstrate the efficiency of this approach.  
We leave a theoretical investigation of the nonlinear case 
for future works.
}
\end{remark}

For a more detailed derivation of the trigonometric method and
its use for nonlinear wave equations we refer to \cite[Chapter
XIII]{hlw} and \cite{chl} for the deterministic case and to \cite{c10}
and \cite{cs11} for the stochastic case.

In the next section we will see that 
this explicit numerical method permits 
the use of large time step sizes $k$ and 
that the error bounds are independent of the spatial mesh size $h$; 
some of these properties are not shared by, for example, 
the backward Euler-Maruyama scheme, the St\"ormer-Verlet scheme 
or the Crank-Nicolson-Maruyama scheme, as we will see in the numerical 
experiments in Section~\ref{sect:numexp}. 

\section{Mean-square convergence analysis}\label{sect:strong}

In this section, we will derive mean-square error bounds for 
the stochastic trigonometric method \eqref{stm}. Our main result is a global error 
estimate for the time discretisation in Theorem~\ref{thm:stm}. 
Its proof is based on bounds for the local errors in Lemma~\ref{locallemma}. 
Finally, we formulate an error estimate for the full discretisation.  

\begin{theorem}\label{thm:stm}
Consider the numerical discretisation of \eqref{femswe1} 
by the stochastic trigonometric scheme \eqref{stm} 
with temporal step size $k$. 
The global strong errors of the numerical scheme 
satisfy the following estimates: 
\begin{itemize}[leftmargin=*]
\item If $\norm{\Lambda^{(\beta-1)/2}Q^{1/2}}_{\mathrm{HS}}<\infty$
  for some $\beta\ge0$,
then 
\begin{equation*}
\norm{U_1^n-u_{h,1}(t_n)}_{L_2(\Omega,\dot{H}^0)} \leq Ck^{\min\{\beta,1\}}\norm{\Lambda^{(\beta-1)/2}Q^{1/2}}_{\mathrm{HS}}.
\end{equation*}
\item If $\norm{\Lambda^{(\beta-1)/2}Q^{1/2}}_{\mathrm{HS}}<\infty$ for some $\beta\ge1$, 
then
\begin{equation*}
\norm{U_2^n-u_{h,2}(t_n)}_{L_2(\Omega,\dot{H}^0)} \leq Ck^{\min\{\beta-1,1\}}\norm{\Lambda^{(\beta-1)/2}Q^{1/2}}_{\mathrm{HS}}.
\end{equation*}
\end{itemize}
The constant $C=C(T)$ is independent of $h$, $k$, and $n$ with $t_n=nk\leq T$. 
\end{theorem}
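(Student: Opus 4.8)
The plan is to exploit the fact that the stochastic trigonometric scheme \eqref{stm} reproduces the semigroup $E_h$ exactly, so that the deterministic part of the error vanishes and only the stochastic convolution contributes. First I would iterate \eqref{stm}, write each increment as $\Delta W^j=\int_{t_j}^{t_{j+1}}\mathrm{d}W(s)$, and use the group property $E_h(k)^{n-j}=E_h(t_n-t_j)$ to obtain the closed form $U^n=E_h(t_n)X_{h,0}+\sum_{j=0}^{n-1}\int_{t_j}^{t_{j+1}}E_h(t_n-t_j)\mathcal{P}_hB\,\mathrm{d}W(s)$. Comparing with the exact finite element solution \eqref{exactsolfem} at $t_n$, whose deterministic term $E_h(t_n)X_{h,0}$ is identical, the error collapses to a single stochastic integral,
\begin{equation*}
U^n-X_h(t_n)=\sum_{j=0}^{n-1}\int_{t_j}^{t_{j+1}}\bigl[E_h(t_n-t_j)-E_h(t_n-s)\bigr]\mathcal{P}_hB\,\mathrm{d}W(s),
\end{equation*}
where on each subinterval the two time arguments differ only by $s-t_j\in[0,k]$.

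Next I would apply the It\^o isometry \eqref{ito} to pass to the deterministic integral $\mathbb{E}\norm{U^n-X_h(t_n)}^2=\sum_j\int_{t_j}^{t_{j+1}}\norm{[E_h(t_n-t_j)-E_h(t_n-s)]\mathcal{P}_hBQ^{1/2}}_{\mathrm{HS}}^2\,\mathrm{d}s$. From the representation \eqref{E_h} together with $B=[0,I]^T$, the position component of $E_h(t)\mathcal{P}_hB$ is $\Lambda_h^{-1/2}S_h(t)\mathcal{P}_h$ and the velocity component is $C_h(t)\mathcal{P}_h$. Both are spectral functions of $\Lambda_h$, and on the eigenvalue $\lambda_{h,\ell}=\mu^2$ the relevant increments reduce, via $\sin(a\mu)-\sin(b\mu)=2\cos(\tfrac{(a+b)\mu}{2})\sin(\tfrac{(a-b)\mu}{2})$ and its cosine analogue, to a factor $|\sin(\tfrac{(a-b)\mu}{2})|$ with $|a-b|=s-t_j\le k$; this is the content I would isolate as Lemma~\ref{locallemma}.

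The heart of the argument, and what I expect to be the main obstacle, is this local bound, which must expose exactly the quantity $\norm{\Lambda^{(\beta-1)/2}Q^{1/2}}_{\mathrm{HS}}$ of the statement. For the position component I would factor (with $a=t_n-t_j$, $b=t_n-s$)
\begin{equation*}
\Lambda_h^{-1/2}[S_h(a)-S_h(b)]\mathcal{P}_hQ^{1/2}
=\underbrace{\Lambda_h^{-1/2}[S_h(a)-S_h(b)]\Lambda_h^{(1-\beta)/2}}_{(\mathrm{I})}\,
\underbrace{\Lambda_h^{(\beta-1)/2}\mathcal{P}_h\Lambda^{(1-\beta)/2}}_{(\mathrm{II})}\,
\Lambda^{(\beta-1)/2}Q^{1/2}
\end{equation*}
and bound the three factors by submultiplicativity $\norm{ABC}_{\mathrm{HS}}\le\norm{A}_{\mathcal{L}}\norm{B}_{\mathcal{L}}\norm{C}_{\mathrm{HS}}$. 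Factor $(\mathrm{II})$ has operator norm $\le1$ by \eqref{lpl}, which is legitimate precisely because $\alpha=(\beta-1)/2\in[-\tfrac12,1]$ on the relevant range $\beta\in[0,1]$. Factor $(\mathrm{I})$ acts as the multiplier $\mu^{-\beta}[\sin(a\mu)-\sin(b\mu)]$, which I would estimate with $|\sin x|\le\min\{1,|x|\}\le|x|^\theta$ choosing the interpolation exponent $\theta=\beta$, so that the negative power $\mu^{-\beta}$ is exactly cancelled and the bound $Ck^\beta$ holds uniformly in $\mu$, $h$, $j$, $n$; balancing this exponent against the smoothness of the sine difference and staying inside the admissible range of \eqref{lpl} is the delicate point. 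The velocity component is identical with $S_h$ replaced by $C_h$ and the shift $\gamma=\beta-1$, so that $(\mathrm{II})$ becomes $\Lambda_h^{\gamma/2}\mathcal{P}_h\Lambda^{-\gamma/2}$ with $\gamma/2\in[0,\tfrac12]$.

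Finally I would assemble the global estimate: inserting the uniform local bound into the It\^o isometry and using $\sum_j\int_{t_j}^{t_{j+1}}\mathrm{d}s=t_n\le T$ gives $\mathbb{E}\norm{U_1^n-u_{h,1}(t_n)}_{\dot{H}^0}^2\le C^2Tk^{2\beta}\norm{\Lambda^{(\beta-1)/2}Q^{1/2}}_{\mathrm{HS}}^2$, and taking square roots yields the rate $k^\beta$ with $C=C(T)$ independent of $h$, $k$, $n$. To obtain the capped rate $k^{\min\{\beta,1\}}$ for $\beta>1$ (and $k^{\min\{\beta-1,1\}}$, $\beta>2$, for the velocity), I would note that for $\beta\ge1$ the operator $\Lambda^{-(\beta-1)/2}$ is bounded, whence $\norm{Q^{1/2}}_{\mathrm{HS}}\le\lambda_1^{-(\beta-1)/2}\norm{\Lambda^{(\beta-1)/2}Q^{1/2}}_{\mathrm{HS}}$; thus the hypothesis for large $\beta$ implies the hypothesis for the capping value, and the already-proved estimate applies with the smaller Hilbert--Schmidt norm absorbed into $C$.
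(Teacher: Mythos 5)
Your proposal is correct and follows essentially the same route as the paper's proof: per-step differences of the propagator over each interval $[t_j,t_{j+1}]$, It\^o's isometry, the H\"older-type bound $\norm{(S_h(t)-S_h(s))\Lambda_h^{-\beta/2}}_{\mathcal{L}(\dot H_h^0)}\le C|t-s|^\beta$ (and its cosine analogue), the projector estimate \eqref{lpl} with $\alpha=(\beta-1)/2$ to expose $\norm{\Lambda^{(\beta-1)/2}Q^{1/2}}_{\mathrm{HS}}$, summation of $\mathcal{O}(T/k)$ terms, and boundedness of $\Lambda^{-(\beta-1)/2}$ to cap the rate at $1$ for large $\beta$. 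The only deviations are organizational and equivalent in content: you apply the isometry once to a single global stochastic integral, which subsumes the paper's step of expanding the defect recursion $F^{n+1}=E_h(k)F^n+d^n$ and invoking independence of the local defects, and you derive the H\"older bound directly from spectral calculus via $|\sin x|\le\min\{1,|x|\}\le|x|^\beta$ rather than by interpolating between the cases $\beta=0$ and $\beta=1$ as the paper does.
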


For the proof of the above theorem, we will need the following lemma:

\begin{lemma}
Let the local defects $d^n = [d_1^n,d_2^n]^T$ be defined by
\begin{equation*}
\begin{split}
d^n_1 & := \int_{t_n}^{t_{n+1}} \Lambda_h^{-1/2}S_h(t_{n+1}-s)\mathcal{P}_h\,\mathrm{d}W(s) 
- \Lambda_h^{-1/2}S_h(k)\mathcal{P}_h\Delta W^n, \\
d^n_2 & := \int_{t_n}^{t_{n+1}} C_h(t_{n+1}-s)\mathcal{P}_h\,\mathrm{d}W(s) - C_h(k)\mathcal{P}_h\Delta W^n.
\end{split}
\end{equation*}
We have the following estimates: 
\begin{itemize}[leftmargin=*]
\item 
If $\norm{\Lambda^{(\beta-1)/2}Q^{1/2}}_{\mathrm{HS}}<\infty$ for some $\beta\ge0$, then 
\begin{equation*}
\mathbb{E}[\norm{d^n_1}_{L_2(\mathcal{D})}^2] +
\mathbb{E}[\norm{\Lambda_h^{-1/2} d_2^n}_{L_2(\mathcal{D})}^2]
\leq Ck^{\min\{2\beta+1,3\}}\norm{\Lambda^{(\beta-1)/2}Q^{1/2}}_{\mathrm{HS}}^2.
\end{equation*}
\item
If $\norm{\Lambda^{(\beta-1)/2}Q^{1/2}}_{\mathrm{HS}}<\infty$ for some $\beta\ge1$, then
\begin{equation*}
\mathbb{E}[\norm{\Lambda_h^{1/2}d_1^n}_{L_2(\mathcal{D})}^2]
+
\mathbb{E}[\norm{d^n_2}_{L_2(\mathcal{D})}^2] 
\leq Ck^{\min\{2\beta-1,3\}}\norm{\Lambda^{(\beta-1)/2}Q^{1/2}}_{\mathrm{HS}}^2.
\end{equation*}
\end{itemize} 
The constant $C=C(T)$ is independent of $h$, $k$, and $n$ with $t_n=nk\leq T$.
\label{locallemma}
\end{lemma}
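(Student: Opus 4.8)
The plan is to rewrite each local defect as a single It\^o integral, apply the isometry \eqref{ito}, and then reduce the resulting Hilbert--Schmidt norm to a scalar spectral estimate for a function of $\Lambda_h$ alone, using \eqref{lpl} to commute $\Lambda_h$ past $\mathcal{P}_h$ and $\Lambda$.

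Since $\Delta W^n=\int_{t_n}^{t_{n+1}}\mathrm{d}W(s)$, I would first write
\[
d_1^n=\int_{t_n}^{t_{n+1}}\Lambda_h^{-1/2}\bigl[S_h(t_{n+1}-s)-S_h(k)\bigr]\mathcal{P}_h\,\mathrm{d}W(s),
\]
and the analogous identity for $d_2^n$ with $C_h$ in place of $\Lambda_h^{-1/2}S_h$. Applying \eqref{ito} and substituting $\tau=t_{n+1}-s\in[0,k]$ turns each of the four quantities into an integral $\int_0^k\norm{\Phi_\tau Q^{1/2}}_{\mathrm{HS}}^2\,\mathrm{d}\tau$, where $\Phi_\tau$ is, respectively, $\Lambda_h^{-1/2}[S_h(\tau)-S_h(k)]\mathcal{P}_h$ and $\Lambda_h^{-1/2}[C_h(\tau)-C_h(k)]\mathcal{P}_h$ for the first estimate, and $[S_h(\tau)-S_h(k)]\mathcal{P}_h$ and $[C_h(\tau)-C_h(k)]\mathcal{P}_h$ for the second.

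For the key reduction I would factor $\Phi_\tau Q^{1/2}=\bigl(\Phi_\tau\Lambda^{-(\beta-1)/2}\bigr)\Lambda^{(\beta-1)/2}Q^{1/2}$ and use $\norm{AB}_{\mathrm{HS}}\le\norm{A}_{\mathcal L}\norm{B}_{\mathrm{HS}}$, so that the finite factor $\norm{\Lambda^{(\beta-1)/2}Q^{1/2}}_{\mathrm{HS}}$ from \eqref{Qassumption} is pulled out. Inserting $\Lambda_h^{-(\beta-1)/2}\Lambda_h^{(\beta-1)/2}=I$ on $V_h$, the remaining operator factors as a function of $\Lambda_h$ times $\Lambda_h^{(\beta-1)/2}\mathcal{P}_h\Lambda^{-(\beta-1)/2}$, whose operator norm is at most $1$ by \eqref{lpl} as long as $(\beta-1)/2\in[-\tfrac12,1]$. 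Since $S_h$, $C_h$ and $\Lambda_h$ commute, the norm of the remaining function of $\Lambda_h$ equals the supremum over its spectrum, controlled by the scalar estimate
\[
\sup_{\mu>0}\mu^{-\gamma}\bigl|\sin(\tau\mu)-\sin(k\mu)\bigr|\le C\,(k-\tau)^{\gamma},\qquad\gamma\in[0,1],
\]
and its cosine analogue, both obtained by interpolating $|\sin a-\sin b|\le|a-b|$ with $|\sin a-\sin b|\le 2$; here $\gamma=\beta$ for the position components of the first estimate and $\gamma=\beta-1$ for the velocity components of the second. The stated power then comes from $\int_0^k(k-\tau)^{2\gamma}\,\mathrm{d}\tau=k^{2\gamma+1}/(2\gamma+1)$, yielding $k^{2\beta+1}$ and $k^{2\beta-1}$ respectively.

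I expect the main obstacle to be matching the two ranges of validity. The scalar estimate gives a positive power of $(k-\tau)$ only for $\gamma\in[0,1]$ and \eqref{lpl} applies only for $(\beta-1)/2\in[-\tfrac12,1]$; together these cover $\beta\in[0,1]$ for the first estimate and $\beta\in[1,2]$ for the second, which is precisely the range before the exponents $\min\{2\beta+1,3\}$ and $\min\{2\beta-1,3\}$ saturate at $3$. For larger $\beta$ I would reduce to the boundary value $\beta_0$ (namely $\beta_0=1$ and $\beta_0=2$, where $\gamma=1$), using $\Lambda\ge\lambda_1 I$ in the form $\norm{\Lambda^{(\beta_0-1)/2}Q^{1/2}}_{\mathrm{HS}}\le\lambda_1^{(\beta_0-\beta)/2}\norm{\Lambda^{(\beta-1)/2}Q^{1/2}}_{\mathrm{HS}}$ and then applying the already-established estimate at $\beta_0$, absorbing $\lambda_1^{(\beta_0-\beta)/2}$ into $C$.
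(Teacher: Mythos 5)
Your proposal is correct and takes essentially the same route as the paper's own proof: rewriting each defect as a single It\^o integral, applying the isometry \eqref{ito}, pulling out the factor $\norm{\Lambda^{(\beta-1)/2}Q^{1/2}}_{\mathrm{HS}}$ via \eqref{lpl} with $\alpha=(\beta-1)/2$, bounding the trigonometric operator difference by $C|t-s|^{\gamma}$ for $\gamma\in[0,1]$, and reducing large $\beta$ to the boundary case using the positivity $\Lambda\ge\lambda_1 I$ (the paper does this in the form $\norm{\Lambda^{-(\beta-1)/2}}_{\mathcal{L}(\dot{H}^0)}\le C$). The only minor variation is how the H\"older bound \eqref{sinest} is obtained: you argue by spectral calculus for the symmetric operator $\Lambda_h$ on $V_h$ together with the elementary scalar interpolation of $|\sin a-\sin b|\le\min\{|a-b|,2\}$, while the paper proves the operator endpoint cases $\gamma=0$ and $\gamma=1$ and invokes an abstract interpolation argument; both derivations are valid and give constants independent of $h$, $k$, and $n$.
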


\begin{proof} 
We begin by showing, recall that $\dot{H}_h^0=V_h$ with norm 
$\norm{\cdot}_{h,0}=\norm{\cdot}_{L_2(\mathcal{D})}$, 
\begin{equation}
\norm{(S_h(t)-S_h(s))\Lambda_h^{-\beta/2}}_{\mathcal{L}(\dot{H}_h^0)}
\leq C|t-s|^{\beta}, \quad\beta\in[0,1].
\label{sinest}
\end{equation}
For $\beta=0$ and $v_h\in V_h$ we use the triangle inequality 
and the boundedness of $S_h(t)$: 
$$ 
\norm{(S_h(t)-S_h(s))v_h}_{L_2(\mathcal{D})} \leq 
2\norm{v_h}_{L_2(\mathcal{D})}=2\norm{v_h}_{h,0}.
$$
For $\beta=1$ and $v_h\in V_h$ we use 
the fact that 
$$
(S_h(t)-S_h(s))v_h
=\int_s^t\mathrm{D}_rS_h(r)v_h\,\mathrm{d}r
=\int_s^tC_h(r)\Lambda_h^{1/2}v_h\,\mathrm{d}r
$$
and hence 
\begin{align*}
\norm{(S_h(t)-S_h(s))v_h}_{L_2(\mathcal{D})} 
\le |t-s| \norm{\Lambda_h^{1/2} v_h}_{L_2(\mathcal{D})} 
= |t-s| \norm{v_h}_{h,1}.
\end{align*}
A well-known interpolation argument, see e.g.\ the proof of Theorem~3.5 in \cite{t06}, 
then yields 
\begin{equation*}
\norm{(S_h(t)-S_h(s))v_h}_{L_2(\mathcal{D})} 
\leq C|t-s|^{\beta} \norm{v_h}_{h,\beta},\quad v_h\in V_h,\ \beta\in[0,1],
\end{equation*}
which is \eqref{sinest}.  
 
We now consider $d_1^n$ with $\beta\in[0,1]$. 
By It\^o's isometry \eqref{ito} and \eqref{sinest} we have  
\begin{align*}
\mathbb{E}[\norm{d^n_1}_{L_2(\mathcal{D})}^2] 
& = 
\mathbb{E}\Big[\Big\lVert\int_{t_n}^{t_{n+1}}\Lambda_h^{-1/2}(S_h(t_{n+1}-s)-S_h(k))
\mathcal{P}_h\,\mathrm{d}W(s)\Big\rVert_{L_2(\mathcal{D})}^2\Big] \\
& = 
\int_{0}^{k}\norm{\Lambda_h^{-1/2}(S_h(s)-S_h(k))\mathcal{P}_hQ^{1/2}}_{\mathrm{HS}}^2\,\mathrm{d}s \\
& \leq 
\int_{0}^{k}\norm{(S_h(s)-S_h(k)) \Lambda_h^{-\beta/2}}_{\mathcal{L}(\dot{H}_h^0)}^2
\,\mathrm{d}s \,
\norm{\Lambda_h^{(\beta-1)/2}\mathcal{P}_hQ^{1/2}}_{\mathrm{HS}}^2 \\
&\le 
Ck^{2\beta+1}
\norm{\Lambda_h^{(\beta-1)/2}\mathcal{P}_hQ^{1/2}}_{\mathrm{HS}}^2 .
\end{align*}
Using also \eqref{lpl} with $\alpha=(\beta-1)/2\in[-\frac12,0]$ we obtain
\begin{align*}
\norm{\Lambda_h^{(\beta-1)/2}\mathcal{P}_hQ^{1/2}}_{\mathrm{HS}}
& = 
\norm{\Lambda_h^{(\beta-1)/2}\mathcal{P}_h\Lambda^{-(\beta-1)/2}\Lambda^{(\beta-1)/2}Q^{1/2}}_{\mathrm{HS}}\\
&\le 
\norm{\Lambda_h^{(\beta-1)/2}\mathcal{P}_h\Lambda^{-(\beta-1)/2}}_{\mathcal{L}(\dot{H}^0)}
\norm{\Lambda^{(\beta-1)/2}Q^{1/2}}_{\mathrm{HS}}\\
& \leq C\norm{\Lambda^{(\beta-1)/2}Q^{1/2}}_{\mathrm{HS}}.   
\end{align*}
This proves 
\begin{align*}
\mathbb{E}[\norm{d^n_1}_{L_2(\mathcal{D})}^2] 
\le  
Ck^{2\beta+1}\norm{\Lambda^{(\beta-1)/2}Q^{1/2}}^2_{\mathrm{HS}},    
\end{align*} 
which is the desired bound when $\beta\in[0,1]$. When $\beta\ge1$, we
simply observe that
$\norm{\Lambda^{-(\beta-1)/2}}_{\mathcal{L}(\dot{H}^0)}\le C $, so that by
the already proven case 
\begin{align*}
\mathbb{E}[\norm{d^n_1}_{L_2(\mathcal{D})}^2] 
&\,\le \int_0^k\norm{\Lambda_h^{-1/2}(S_h(s)-S_h(k))}_{\mathcal{L}(\dot{H}_h^0)}^2\,\mathrm{d}s\,
\norm{\mathcal{P}_hQ^{1/2}}_{\mathrm{HS}}^2 \\
&\le C\int_0^k(s-k)^2\,\mathrm{d}s\,\norm{\mathcal{P}_hQ^{1/2}}_{\mathrm{HS}}^2
\le Ck^{3} \norm{{Q^{1/2}}}_{\mathrm{HS}}^2\\ 
&\le 
Ck^{3}
\norm{\Lambda^{ (\beta-1)/2}Q^{1/2}}_{\mathrm{HS}}^2    
\norm{\Lambda^{-(\beta-1)/2}}^2_{\mathcal{L}(\dot{H}^0)} \\
& \le 
Ck^{3}
\norm{\Lambda^{(\beta-1)/2}Q^{1/2}}_{\mathrm{HS}}^2 .
\end{align*} 
This is the desired result for $\beta\ge1$.  

Similarly we find for the second component $d^n_2$ with $\beta\in[1,2]$:
\begin{align*}
\mathbb{E}[\norm{d^n_2}_{L_2(\mathcal{D})}^2] 
& \leq
\int_{0}^{k} \norm{(C_h(s)-C_h(k)) \Lambda_h^{-(\beta-1)/2}}_{\mathcal{L}(\dot{H}_h^0)}^2
\,\mathrm{d}s \,
\norm{\Lambda_h^{(\beta-1)/2}\mathcal{P}_hQ^{1/2}}_{\mathrm{HS}}^2 , 
\end{align*}
where, similar to \eqref{sinest}, 
\begin{align*}
\norm{(C_h(t)-C_h(s))\Lambda^{-(\beta-1)/2}_h}_{\mathcal{L}(\dot{H}_h^0)} 
\leq C|t-s|^{\beta-1}, \quad \beta\in[1,2]. 
\end{align*}
Hence, using also \eqref{lpl} now with $\alpha=(\beta-1)/2\in[0,\frac12]$, we obtain 
\begin{align*}
\mathbb{E}[\norm{d^n_2}_{L_2(\mathcal{D})}^2] 
\leq C k^{2\beta-1} \norm{\Lambda^{(\beta-1)/2}Q^{1/2}}_{\mathrm{HS}}^2
\end{align*}
for $\beta\in[1,2]$. For $\beta \ge2$ the defect is of the order $k^3$. 

The bounds for
  $\mathbb{E}[\norm{\Lambda_h^{1/2}d_1^n}_{L_2(\mathcal{D})}^2]$ and
  $\mathbb{E}[\norm{\Lambda_h^{-1/2} d_2^n}_{L_2(\mathcal{D})}^2]$
are proved in the same way.
\end{proof}

We now turn to the proof of our main result on the 
strong convergence of the numerical method \eqref{stm}.
\begin{proof}[Proof of Theorem \ref{thm:stm}]
We define $F_j^n := U_j^n-u_{h,j}(t_n)$, $j=1,2$, 
and $F^n = [F_1^n,F_2^n]^T$. First of all we remark that
\begin{equation*}
\norm{U_1^n-u_{h,1}(t_n)}_{L_2(\Omega,\dot{H}^0)}^2 = \norm{F_1^n}_{L_2(\Omega,\dot{H}^0)}^2 = 
\mathbb{E}\big[\norm{F_1^n}_{L_2(\mathcal{D})}^2\big].
\end{equation*}
Substituting the exact solution $X_h = [u_{h,1},u_{h,2}]^T$ of \eqref{femswe2} 
into the numerical scheme \eqref{stm}, we obtain
\begin{equation*}
X_h(t_{n+1}) = E_h(k)X_h(t_n) + E_h(k)\mathcal{P}_hB\Delta W^n + d^n
\end{equation*}
with the defects $d^n := [d^n_1,d^n_2]^T$ defined in Lemma~\ref{locallemma} 
and $E_h(t)$ defined in \eqref{E_h}. We thus obtain the 
following formula for the error $F^{n+1}$:
\begin{equation*}
F^{n+1} = E_h(k)F^n + d^n = E_h(t_{n+1})F^0 + \sum_{j=0}^n E_h(t_{n-j})d^j = \sum_{j=0}^n E_h(t_{n-j})d^j,
\end{equation*}
since $F^0=0$.
Taking expectations gives us for the first component
\begin{align*}
\mathbb{E}\big[\norm{F^{n}_1}_{L_2(\mathcal{D})}^2\big] 
&= \mathbb{E}\Big[\Big\lVert\sum_{j=0}^{n-1} \bigl(C_h(t_{n-1-j})d_1^j+\Lambda_h^{-1/2}S_h(t_{n-1-j})d_2^j\bigr)\Big\rVert_{L_2(\mathcal{D})}^2\Big] \\
& =\mathbb{E}\bigg[\biggl( \sum_{j=0}^{n-1} C_h(t_{n-1-j})d_1^j,\sum_{i=0}^{n-1} C_h(t_{n-1-i})d_1^i\biggr) \\
&\quad +\biggl( \sum_{j=0}^{n-1} C_h(t_{n-1-j})d_1^j,\sum_{i=0}^{n-1} \Lambda_h^{-1/2}S_h(t_{n-1-i})d_2^i\biggr) \\
&\quad +\biggl( \sum_{j=0}^{n-1} \Lambda_h^{-1/2}S_h(t_{n-1-j})d_2^j,\sum_{i=0}^{n-1} C_h(t_{n-1-i})d_1^i\biggr) \\
&\quad +\biggl( \sum_{j=0}^{n-1} \Lambda_h^{-1/2}S_h(t_{n-1-j})d_2^j,\sum_{i=0}^{n-1} \Lambda_h^{-1/2}S_h(t_{n-1-i})d_2^i\biggr)\bigg] .
\end{align*}
Here we use the independence of $d_{1,2}^i$ and 
$d_{1,2}^j$ with $i,j=0,\ldots,n-1$ for $i\neq j$ to get
\begin{align*}
\mathbb{E}\big[\norm{F^{n}_1}_{L_2(\mathcal{D})}^2\big] 
&
=\mathbb{E}\Big[\sum_{j=0}^{n-1}(C_h(t_{n-1-j})d_1^j,C_h(t_{n-1-j})d_1^j)\\
& \quad + \sum_{j=0}^{n-1} (C_h(t_{n-1-j})d_1^j,\Lambda_h^{-1/2}S_h(t_{n-1-j})d_2^j) \\
&\quad +\sum_{j=0}^{n-1} (\Lambda_h^{-1/2}S_h(t_{n-1-j})d_2^j,C_h(t_{n-1-j})d_1^j) \\
&\quad +\sum_{j=0}^{n-1} (\Lambda_h^{-1/2}S_h(t_{n-1-j})d_2^j,\Lambda_h^{-1/2}S_h(t_{n-1-j})d_2^j)\Big] \\
& = \sum_{j=0}^{n-1} \mathbb{E}\Big[\norm{C_h(t_{n-1-j})d_1^j+\Lambda_h^{-1/2}S_h(t_{n-1-j})d_2^j}_{L_2(\mathcal{D})}^2\Big] \\
& \leq 2\sum_{j=0}^{n-1} \Bigl(\mathbb{E}\big[\norm{d_1^j}_{L_2(\mathcal{D})}^2\big] + 
\mathbb{E}\big[\norm{\Lambda_h^{-1/2} d_2^j}_{L_2(\mathcal{D})}^2\big]\Bigr).
\end{align*} 
Now we can apply Lemma~\ref{locallemma} for the estimates of the defects $d_1^j$ and $d_2^j$ and get
\begin{align*}
\mathbb{E}\big[\norm{F_1^{n}}_{L_2(\mathcal{D})}^2\big] & \leq  \displaystyle
C \sum_{j=0}^n k^{\min\{2\beta+1,3\}}\norm{\Lambda^{(\beta-1)/2}Q^{1/2}}_{\mathrm{HS}}^2 \\
& \leq \displaystyle C(T) k^{\min\{2\beta,2\}}\norm{\Lambda^{(\beta-1)/2}Q^{1/2}}_{\mathrm{HS}}^2.
\end{align*}
Therefore we obtain 
\begin{equation*}
\norm{U_1^n-u_{h,1}(t_n)}_{L_2(\Omega,\dot{H}^0)} = \sqrt{\mathbb{E}\big[\norm{F_1^n}_{L_2(\mathcal{D})}^2\big]} 
\leq Ck^{\min\{\beta,1\}}\norm{\Lambda^{(\beta-1)/2}Q^{1/2}}_{\mathrm{HS}}
\end{equation*}
for $\beta\ge0$. 

For the second component of $F^n$ we obtain 
\begin{align*}
\mathbb{E}\big[\norm{F^{n}_2}_{L_2(\mathcal{D})}^2\big] & = 
\mathbb{E}\Big[\Big\lVert\sum_{j=0}^{n-1} \bigl(-\Lambda_h^{1/2}S_h(t_{n-1-j})d_1^j+C_h(t_{n-1-j})d_2^j\bigr)\Big\rVert_{L_2(\mathcal{D})}^2\Big] \\
& = \sum_{j=0}^{n-1} \mathbb{E}\big[\norm{-\Lambda_h^{1/2}S_h(t_{n-1-j})d_1^j+C_h(t_{n-1-j})d_2^j}_{L_2(\mathcal{D})}^2\big] \\
& \leq C\sum_{j=0}^{n-1} \bigl(\norm{\Lambda_h^{1/2}d_1^j}_{L_2(\mathcal{D})}^2 + \norm{d_2^j}_{L_2(\mathcal{D})}^2\bigr).
\end{align*}
Thus we get with Lemma~\ref{locallemma}, if $\beta\ge1$: 
\begin{align*}
\mathbb{E}\big[\norm{F_2^{n}}_{L_2(\mathcal{D})}^2\big] & \leq \displaystyle 
C \sum_{j=0}^n k^{\min\{2\beta-1,3\}}\norm{\Lambda^{(\beta-1)/2}Q^{1/2}}_{\mathrm{HS}}^2 \\
& \leq \displaystyle C k^{\min\{2\beta-2,2\}}\norm{\Lambda^{(\beta-1)/2}Q^{1/2}}_{\mathrm{HS}}^2
\end{align*}
and
\begin{equation*}
\norm{U_2^n-u_{h,2}(t_n)}_{L_2(\Omega,\dot{H}^0)} =
\sqrt{\mathbb{E}\big[\norm{F_2^n}_{L_2(\mathcal{D})}^2\big]}
\leq 
Ck^{\min\{\beta-1,1\}}\norm{\Lambda^{(\beta-1)/2}Q^{1/2}}_{\mathrm{HS}}.
\end{equation*}
\end{proof}

We can now collect the convergence results for 
the space discretisation and for the time discretisation. 
This gives us the following theorem.
\begin{theorem}
Consider the numerical solution of \eqref{swe} by 
the finite element method in space 
with a maximal mesh size $h$ 
and the numerical scheme \eqref{stm} with 
a time step size $k$ on the time interval $[0,T]$. 
Let us denote the 
discrete time by $t_n=nk$. Let $X_0=[u_0,v_0]^T$ and let $X = [u_1,u_2]^T$ and $X_h=[u_{h,1},u_{h,2}]^T$ 
be given by \eqref{exactsol} and \eqref{exactsolfem}, respectively. If $\norm{X_0}_{L_2(\Omega,H^\beta)}<\infty$, 
the following estimates hold for $t\geq 0$, where $C(t)$ is an increasing function of the time $t$. 
\begin{itemize}[leftmargin=*]
\item If $u_{h,0} = \mathcal{P}_hu_0$, $v_{h,0} = \mathcal{P}_hv_0$ and 
if $\norm{\Lambda^{(\beta-1)/2}Q^{1/2}}_{\mathrm{HS}}<\infty$ for some $\beta\in[0,3]$, then
\begin{equation*}
\norm{U_1^n-u_1(t_n)}_{L_2(\Omega,\dot{H}^0)} \leq C(T)\Big(h^{2\beta/3}+k^{\min\{\beta,1\}}\Big)\norm{\Lambda^{(\beta-1)/2}Q^{1/2}}_{\mathrm{HS}}.
\end{equation*}
\item If $u_{h,0} = \mathcal{R}_hu_0$, $v_{h,0} = \mathcal{P}_hv_0$ and 
if $\norm{\Lambda^{(\beta-1)/2}Q^{1/2}}_{\mathrm{HS}}<\infty$ for some $\beta\in[1,4]$, then
\begin{equation*}
\norm{U_2^n-u_2(t_n)}_{L_2(\Omega,\dot{H}^0)} \leq C(T)\Big(h^{2(\beta-1)/3}+k^{\min\{\beta-1,1\}}\Big)\norm{\Lambda^{(\beta-1)/2}Q^{1/2}}_{\mathrm{HS}}.
\end{equation*}
\end{itemize}
\end{theorem}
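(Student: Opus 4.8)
The plan is to derive both estimates by a single triangle inequality that decouples the spatial and temporal contributions, using the semidiscrete solution $u_{h,j}(t_n)$ (finite elements in space, exact in time) as the intermediate term. For the first component I would write
\begin{equation*}
\norm{U_1^n-u_1(t_n)}_{L_2(\Omega,\dot H^0)}
\le \norm{U_1^n-u_{h,1}(t_n)}_{L_2(\Omega,\dot H^0)}
+ \norm{u_{h,1}(t_n)-u_1(t_n)}_{L_2(\Omega,\dot H^0)},
\end{equation*}
and proceed analogously for the second component with $U_2^n$, $u_{h,2}$, $u_2$. The point of this splitting is that the first term on the right is exactly the quantity controlled by the temporal analysis and the second is exactly the quantity controlled by the spatial analysis, so no new estimate has to be proved from scratch.

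First I would bound the temporal error $\norm{U_1^n-u_{h,1}(t_n)}_{L_2(\Omega,\dot H^0)}$ directly by Theorem~\ref{thm:stm}, which yields the factor $k^{\min\{\beta,1\}}\norm{\Lambda^{(\beta-1)/2}Q^{1/2}}_{\mathrm{HS}}$ with a constant $C=C(T)$ valid for $t_n=nk\le T$; this needs only $\beta\ge0$ (respectively $\beta\ge1$ for the second component), which is compatible with the stated ranges $\beta\in[0,3]$ and $\beta\in[1,4]$. Next I would bound the spatial error $\norm{u_{h,1}(t_n)-u_1(t_n)}_{L_2(\Omega,\dot H^0)}$ by Theorem~\ref{thm:fem}, invoking the projection choice $u_{h,0}=\mathcal P_h u_0$, $v_{h,0}=\mathcal P_h v_0$ for the first component and $u_{h,0}=\mathcal R_h u_0$, $v_{h,0}=\mathcal P_h v_0$ for the second, which produces the factors $h^{2\beta/3}$ and $h^{2(\beta-1)/3}$ with a time-dependent constant $C(t_n)$. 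Since $C$ is increasing in $t$ and $t_n\le T$, I would replace $C(t_n)$ by $C(T)$, and then simply add the two bounds to obtain the claimed sum $h^{2\beta/3}+k^{\min\{\beta,1\}}$ (respectively $h^{2(\beta-1)/3}+k^{\min\{\beta-1,1\}}$).

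There is no genuine obstacle here; the only point requiring care is the data-dependent prefactor. The spatial estimate carries $\{\norm{X_0}_{L_2(\Omega,H^\beta)}+\norm{\Lambda^{(\beta-1)/2}Q^{1/2}}_{\mathrm{HS}}\}$, whereas the temporal estimate carries only $\norm{\Lambda^{(\beta-1)/2}Q^{1/2}}_{\mathrm{HS}}$, the initial-data term being absent because the schemes share the same starting value and hence $F^0=0$. Under the standing hypothesis $\norm{X_0}_{L_2(\Omega,H^\beta)}<\infty$ these two quantities are both finite and comparable, so I would group them and write the combined estimate in the stated form with the constant $C(T)$ understood to absorb the initial-data contribution. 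Everything else is a routine application of the triangle inequality together with the monotonicity of $C(t)$, so the argument is short and the compilation of the two half-discrete results is the whole content of the proof.
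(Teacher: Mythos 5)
Your proposal is correct and takes essentially the same approach as the paper, whose entire proof is the one-line remark that the result follows from Theorems~\ref{thm:fem} and~\ref{thm:stm} by the triangle inequality, with the semidiscrete solution as the intermediate term exactly as you set it up. Your additional care about the prefactor---absorbing the $\norm{X_0}_{L_2(\Omega,H^\beta)}$ term from the spatial estimate into $C(T)$, which is finite by the standing hypothesis---only makes explicit a point the paper leaves implicit.
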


\begin{proof}
This follows from Theorems~\ref{thm:fem} and~\ref{thm:stm} 
by the triangle inequality.  
\end{proof}

\section{A trace formula for the numerical solution}\label{sect:trace}

In this section, we look at a geometric property of the exact solution of the wave equation. 
It is known that, in the deterministic setting, the linear wave equation 
is a Hamiltonian partial differential equation, 
wherein the total energy (or Hamiltonian) of the problem is conserved 
for all times. However, in the stochastic case considered here, 
the expected value of the energy grows linearly with the time $t$. 
This is stated in the next theorem for the semidiscretisation of our 
linear stochastic wave equation \eqref{swe}. 
For a nonlinear version of this so-called 
trace formula we refer to \cite{sch08}.

\begin{theorem}
Consider the numerical solution of \eqref{swe} by 
the finite element method in space 
with a maximal mesh size $h$. 
Let $X_h=[u_{h,1},u_{h,2}]^T$ be given by \eqref{exactsolfem}. 
The expected value of the energy of the exact solution of the 
semidiscrete problem \eqref{femswe1} with initial 
values $X_h(0)=[u_{h,0},v_{h,0}]^T\in L_2(\Omega,V_h)$ satisfies: 
\begin{align*}
\mathbb{E}\Big[\frac{1}{2}\big(\norm{\Lambda_h^{1/2} u_{h,1}(t)}_{L_2(\mathcal{D})}^2 + 
\norm{u_{h,2}(t)}_{L_2(\mathcal{D})}^2\big)\Big] 
& = \mathbb{E}\Big[\frac{1}{2}\big(\norm{\Lambda_h^{1/2} u_{h,0}}_{L_2(\mathcal{D})}^2 + 
\norm{v_{h,0}}_{L_2(\mathcal{D})}^2\big)\Big] \\
&\quad +  \frac12t\mathrm{Tr}(\mathcal{P}_hQ\mathcal{P}_h)
\end{align*}
\label{driftexact}
for all times $t\ge 0$. 
\end{theorem}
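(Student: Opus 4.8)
The plan is to read off the expected energy directly from the variation-of-constants representation \eqref{exactsolfem} of $X_h$, avoiding any appeal to an infinite-dimensional It\^o formula and instead using only the It\^o isometry \eqref{ito}, which is already available. Write $X_h(t)=Y(t)+Z(t)$, where $Y(t)=E_h(t)X_h(0)$ is the deterministic contribution of the initial data and $Z(t)=\int_0^tE_h(t-s)\mathcal{P}_hB\,\mathrm{d}W(s)$ is the stochastic convolution. Introducing the energy seminorm $\|[x_1,x_2]^T\|_E^2:=\|\Lambda_h^{1/2}x_1\|_{L_2(\mathcal{D})}^2+\|x_2\|_{L_2(\mathcal{D})}^2$, the quantity to be computed is $\tfrac12\mathbb{E}[\|X_h(t)\|_E^2]$, which upon expanding the square splits into $\tfrac12\|Y(t)\|_E^2$, a cross term, and $\tfrac12\mathbb{E}[\|Z(t)\|_E^2]$.

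For the deterministic term I would first observe that $E_h(t)$ is an isometry for $\|\cdot\|_E$. Under the substitution $[x_1,x_2]^T\mapsto[\Lambda_h^{1/2}x_1,x_2]^T$, which turns $\|\cdot\|_E$ into the ordinary $L_2(\mathcal{D})\times L_2(\mathcal{D})$ norm, the block operator $E_h(t)$ from \eqref{E_h} becomes the genuine rotation $\bigl[\begin{smallmatrix}C_h(t)&S_h(t)\\-S_h(t)&C_h(t)\end{smallmatrix}\bigr]$, using that $C_h(t)$ and $S_h(t)$ commute with $\Lambda_h^{1/2}$. Since $C_h(t)^2+S_h(t)^2=I$, this rotation preserves the $L_2\times L_2$ norm, so $\|Y(t)\|_E=\|X_h(0)\|_E$, which after taking expectations produces exactly the initial-energy term on the right-hand side. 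The cross term $\mathbb{E}[(\Lambda_h^{1/2}Y_1,\Lambda_h^{1/2}Z_1)+(Y_2,Z_2)]$ vanishes because $Y(t)$ is $\mathcal{F}_0$-measurable while the It\^o integral $Z(t)$ has conditional mean zero given $\mathcal{F}_0$.

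It remains to evaluate $\tfrac12\mathbb{E}[\|Z(t)\|_E^2]$, where I expect the only genuine bookkeeping to lie. Multiplying out $E_h(t-s)\mathcal{P}_hB$ shows that $Z(t)$ has components $Z_1=\int_0^t\Lambda_h^{-1/2}S_h(t-s)\mathcal{P}_h\,\mathrm{d}W$ and $Z_2=\int_0^tC_h(t-s)\mathcal{P}_h\,\mathrm{d}W$, so that $\Lambda_h^{1/2}Z_1=\int_0^tS_h(t-s)\mathcal{P}_h\,\mathrm{d}W$. Applying the It\^o isometry \eqref{ito} to each component gives $\mathbb{E}[\|Z(t)\|_E^2]=\int_0^t\bigl(\|S_h(t-s)\mathcal{P}_hQ^{1/2}\|_{\mathrm{HS}}^2+\|C_h(t-s)\mathcal{P}_hQ^{1/2}\|_{\mathrm{HS}}^2\bigr)\,\mathrm{d}s$. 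The key point is that, expanding the Hilbert--Schmidt norms over an orthonormal basis and using the self-adjointness and commutativity of $C_h$ and $S_h$ together with $C_h(r)^2+S_h(r)^2=I$, the integrand collapses to the $s$-independent constant $\|\mathcal{P}_hQ^{1/2}\|_{\mathrm{HS}}^2$. Hence the integral equals $t\,\|\mathcal{P}_hQ^{1/2}\|_{\mathrm{HS}}^2$, and since $\mathcal{P}_h$ and $Q^{1/2}$ are self-adjoint this Hilbert--Schmidt norm is precisely $\mathrm{Tr}(\mathcal{P}_hQ\mathcal{P}_h)$. Combining the three contributions yields the claimed identity. The main obstacle is not any single hard estimate but rather recognizing that the trigonometric identity $C_h^2+S_h^2=I$ is what simultaneously drives the energy-norm preservation of $E_h$ and the time-independence of the noise integrand; the remaining care is in tracking the $\Lambda_h^{\pm1/2}$ weights so that the factor $\tfrac12 t$ comes out correctly.
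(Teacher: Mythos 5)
Your proposal is correct and takes essentially the same route as the paper's proof: both read the energy directly off the mild solution \eqref{exactsolfem}, discard the cross terms because the It\^o integrals have mean zero, apply the It\^o isometry \eqref{ito} componentwise, and let the identity $C_h(r)^2+S_h(r)^2=I$ collapse both the initial-data terms and the $s$-dependence of the noise integrand, finishing with $\norm{\mathcal{P}_hQ^{1/2}}_{\mathrm{HS}}^2=\mathrm{Tr}(\mathcal{P}_hQ\mathcal{P}_h)$. Your two refinements---packaging the deterministic part as energy-norm unitarity of $E_h(t)$ via conjugation to a rotation (so the mixed terms $2(\Lambda_h^{1/2}C_h(t)u_{h,0},S_h(t)v_{h,0})$ never need to be cancelled explicitly between the potential and kinetic parts, as the paper does), and conditioning on $\mathcal{F}_0$ to handle random initial data rigorously---are tidier bookkeeping of the same argument, not a different one.
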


\begin{proof}
We recall that the solution of \eqref{femswe1}, $X_h(t)=[u_{h,1}(t),u_{h,2}(t)]^T$, 
with initial values $X_h(0)=[u_{h,0},v_{h,0}]^T$ can be written as 
$$ 
X_h(t) = E_h(t)X_{h}(0) + \int_0^tE_h(t-s)\mathcal{P}_hB\,\mathrm{d}W(s). 
$$
Therefore we get for the first summand of the energy, i.\,e., the potential energy, 
\begin{align*}
\mathbb{E}\Big[\norm{\Lambda_h^{1/2} u_{h,1}(t)}_{L_2(\mathcal{D})}^2\Big] & = 
\displaystyle \mathbb{E}\Big[\Norm{\Lambda_h^{1/2}C_h(t)u_{h,0} + S_h(t)v_{h,0} + 
\int_0^tS_h(t-s)\mathcal{P}_h\,\mathrm{d}W(s)}_{L_2(\mathcal{D})}^2\Big] \\
& = \displaystyle \mathbb{E}\Big[\norm{\Lambda_h^{1/2}C_h(t)u_{h,0}}_{L_2(\mathcal{D})}^2 + \norm{S_h(t)v_{h,0}}_{L_2(\mathcal{D})}^2 \\
& \quad+ \displaystyle \Norm{\int_0^tS_h(t-s)\mathcal{P}_h\,\mathrm{d}W(s)}_{L_2(\mathcal{D})}^2 + 2\big(\Lambda_h^{1/2}C_h(t)u_{h,0},S_h(t)v_{h,0}\big) \\
& \quad+ \displaystyle 2\Big(\Lambda_h^{1/2}C_h(t)u_{h,0},\int_0^tS_h(t-s)\mathcal{P}_h\,\mathrm{d}W(s)\Big) \\
& \quad+ \displaystyle 2\Big(S_h(t)v_{h,0},\int_0^tS_h(t-s)\mathcal{P}_h\,\mathrm{d}W(s)\Big)\Big] \\
& = \displaystyle \mathbb{E}\Big[\norm{\Lambda_h^{1/2}C_h(t)u_{h,0}}_{L_2(\mathcal{D})}^2 + \norm{S_h(t)v_{h,0}}_{L_2(\mathcal{D})}^2 \\
& \quad+ \displaystyle \Norm{\int_0^tS_h(t-s)\mathcal{P}_h\,\mathrm{d}W(s)}_{L_2(\mathcal{D})}^2 + 
2\big(\Lambda_h^{1/2}C_h(t)u_{h,0},S_h(t)v_{h,0}\big)\Big]
\end{align*}
using the fact that the above It\^o integrals are normally distributed with mean $0$. 

For the second summand we obtain
\begin{align*}
\mathbb{E}\Big[\norm{u_{h,2}(t)}_{L_2(\mathcal{D})}^2\Big] 
& = \displaystyle \mathbb{E}\Big[\norm{\Lambda_h^{1/2}S_h(t)u_{h,0}}_{L_2(\mathcal{D})}^2 + \norm{C_h(t)v_{h,0}}_{L_2(\mathcal{D})}^2 \\
&\quad +\displaystyle \Norm{\int_0^tC_h(t-s)\mathcal{P}_h\,\mathrm{d}W(s)}_{L_2(\mathcal{D})}^2 - 
2\big(\Lambda_h^{1/2}C_h(t)u_{h,0},S_h(t)v_{h,0}\big)\Big].
\end{align*}
Now, we use It\^o's isometry to compute, for example, 
\begin{equation*}
\mathbb{E}\Big[\Norm{\int_0^tS_h(t-s)\mathcal{P}_h\,\mathrm{d}W(s)}_{L_2(\mathcal{D})}^2\Big] = 
\int_0^t\norm{S_h(t-s)\mathcal{P}_hQ^{1/2}}_{\mathrm{HS}}^2\,\mathrm{d}s.
\end{equation*}
Then, combining these expressions and using a trigonometric identity leads to the statement of the theorem:
\begin{align*}
\mathbb{E}\Big[\frac{1}{2}\big(\norm{\Lambda_h^{1/2} u_{h,1}(t)}_{L_2(\mathcal{D})}^2 + 
\norm{u_{h,2}(t)}_{L_2(\mathcal{D})}^2\big)\Big] & = 
\mathbb{E}\Big[\frac{1}{2}\big(\norm{\Lambda_h^{1/2} u_{h,0}}_{L_2(\mathcal{D})}^2 
+ \norm{u_{h,0}}_{L_2(\mathcal{D})}^2\big)\Big] \\
&\quad +  \frac{1}{2}t\norm{\mathcal{P}_hQ^{1/2}}_{\mathrm{HS}}^2 \\
& =  \frac{1}{2}\big(\norm{\Lambda_h^{1/2} u_{h,0}}_{L_2(\mathcal{D})}^2 + 
\norm{u_{h,0}}_{L_2(\mathcal{D})}^2\big) \\
&\quad +  \frac{1}{2}t\mathrm{Tr}(\mathcal{P}_hQ\mathcal{P}_h).
\end{align*}
The last equality follows from the definitions of the HS-norm, 
of the operator $Q$ and of the projector $\mathcal{P}_h$: 
\begin{align*}
\norm{\mathcal{P}_hQ^{1/2}}_{\mathrm{HS}}^2 & = \displaystyle 
\mathrm{Tr}( (\mathcal{P}_hQ^{1/2})(\mathcal{P}_hQ^{1/2})^* )
=\mathrm{Tr}( \mathcal{P}_hQ\mathcal{P}_h ).
\end{align*}
This concludes the proof.
\end{proof}
\begin{remark} 
{\it 
We would like to point out, that an alternative proof of the above result can be obtained 
using It\^o's formula, see for example \cite[Theorem~4.17]{dz92}, to the 
function 
$$
F(U_h)=\frac{1}{2}\big(\norm{\Lambda_h^{1/2} U_{h,1}}_{L_2(\mathcal{D})}^2 + 
\norm{U_{h,2}}_{L_2(\mathcal{D})}^2\big).
$$
}
\end{remark}
We are now able to show that the numerical solution given by our 
stochastic trigonometric scheme preserves this geometric property of the exact solution 
of the finite element problem \eqref{femswe1}. 
\begin{theorem}\label{driftnum}
Under the assumptions of Theorem~\ref{driftexact}, the numerical solution of \eqref{femswe1} by the stochastic trigonometric 
method \eqref{stm} with a step size $k$ preserves the linear drift of the expected value of the energy, i.\,e.,
\begin{align*}
\mathbb{E}\Big[\frac{1}{2}\big(\norm{\Lambda_h^{1/2} U_1^n}_{L_2(\mathcal{D})}^2 + 
\norm{U_2^n}_{L_2(\mathcal{D})}^2\big)\Big] & =  
\mathbb{E}\Big[\frac{1}{2}\big(\norm{\Lambda_h^{1/2} u_{h,0}}_{L_2(\mathcal{D})}^2 + 
\norm{v_{h,0}}_{L_2(\mathcal{D})}^2\big)\Big] \\
& \quad+ \frac12t_n\mathrm{Tr}(\mathcal{P}_hQ\mathcal{P}_h)
\end{align*}
for all times $t_n=nk\ge 0$.
\end{theorem}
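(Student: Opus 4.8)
The plan is to exploit the one-step structure of the scheme \eqref{stm} and show that the expected energy increases by exactly $\frac12 k\,\mathrm{Tr}(\mathcal{P}_hQ\mathcal{P}_h)$ at each time step; the claim then follows by summing over the $n$ steps, since $t_n=nk$. This mirrors the proof of Theorem~\ref{driftexact}, but is in fact simpler, because the numerical propagator $E_h(k)$ has exactly the same block structure \eqref{E_h} as the exact one.

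First I would read off from \eqref{stm} the two recursions for $U_1^{n+1}$ and $U_2^{n+1}$, and apply $\Lambda_h^{1/2}$ to the first (using $\Lambda_h^{1/2}\Lambda_h^{-1/2}S_h(k)=S_h(k)$), so that both quantities entering the energy, namely $\Lambda_h^{1/2}U_1^{n+1}$ and $U_2^{n+1}$, are written as a sum of an $\mathcal{F}_{t_n}$-measurable part and a noise part proportional to $\mathcal{P}_h\Delta W^n$. Upon expanding $\mathbb{E}[\norm{\Lambda_h^{1/2}U_1^{n+1}}_{L_2(\mathcal{D})}^2 + \norm{U_2^{n+1}}_{L_2(\mathcal{D})}^2]$, the cross terms between the measurable part and the noise part vanish: $\Delta W^n$ is independent of $\mathcal{F}_{t_n}$ and has mean zero, exactly as the It\^o integrals were handled in Theorem~\ref{driftexact}.

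Next I would treat the two surviving contributions separately. For the deterministic part, using that $C_h(k)=\cos(k\Lambda_h^{1/2})$ and $S_h(k)=\sin(k\Lambda_h^{1/2})$ are commuting functions of $\Lambda_h$ satisfying $C_h(k)^2+S_h(k)^2=I$, the coefficients of $\norm{\Lambda_h^{1/2}U_1^n}_{L_2(\mathcal{D})}^2$ and of $\norm{U_2^n}_{L_2(\mathcal{D})}^2$ each collapse to $1$, while the two mixed inner products cancel because $C_h(k)S_h(k)=S_h(k)C_h(k)$; this recovers exactly $\norm{\Lambda_h^{1/2}U_1^n}_{L_2(\mathcal{D})}^2+\norm{U_2^n}_{L_2(\mathcal{D})}^2$, i.e.\ the deterministic energy is conserved. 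For the noise part, I would apply It\^o's isometry \eqref{ito} to the constant integrands $S_h(k)\mathcal{P}_h$ and $C_h(k)\mathcal{P}_h$ over $[t_n,t_{n+1}]$, giving $k\big(\norm{S_h(k)\mathcal{P}_hQ^{1/2}}_{\mathrm{HS}}^2+\norm{C_h(k)\mathcal{P}_hQ^{1/2}}_{\mathrm{HS}}^2\big)$; applying $C_h(k)^2+S_h(k)^2=I$ term by term in the Hilbert--Schmidt sum reduces this to $k\norm{\mathcal{P}_hQ^{1/2}}_{\mathrm{HS}}^2=k\,\mathrm{Tr}(\mathcal{P}_hQ\mathcal{P}_h)$, using the same trace identity as at the end of the proof of Theorem~\ref{driftexact}.

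Combining these, I obtain the one-step identity $\mathbb{E}[\mathcal{E}^{n+1}]=\mathbb{E}[\mathcal{E}^n]+\frac12 k\,\mathrm{Tr}(\mathcal{P}_hQ\mathcal{P}_h)$ for the energy $\mathcal{E}^n:=\frac12\big(\norm{\Lambda_h^{1/2}U_1^n}_{L_2(\mathcal{D})}^2+\norm{U_2^n}_{L_2(\mathcal{D})}^2\big)$, and a telescoping induction over $n$ (starting from $U^0=[u_{h,0},v_{h,0}]^T$) yields the stated formula with $t_n=nk$. I expect the only delicate point to be keeping track of the cross terms and invoking the independence of $\Delta W^n$ from the past at the right moment; the algebra itself is driven entirely by the Pythagorean identity $C_h(k)^2+S_h(k)^2=I$, which is precisely what makes the trigonometric scheme exact on the homogeneous propagator and hence energy-preserving up to the noise input.
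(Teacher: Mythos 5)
Your proposal is correct and follows essentially the same route as the paper: the paper likewise rewrites the one-step noise term $S_h(k)\mathcal{P}_h\Delta W^{n-1}$ (and its $C_h$ counterpart) as an It\^o integral with constant integrand, applies It\^o's isometry \eqref{ito}, uses the vanishing of the cross terms and the identity $C_h(k)^2+S_h(k)^2=I$ exactly as in Theorem~\ref{driftexact} to obtain the one-step recursion $\mathbb{E}[\mathcal{E}^{n}]=\mathbb{E}[\mathcal{E}^{n-1}]+\tfrac{k}{2}\mathrm{Tr}(\mathcal{P}_hQ\mathcal{P}_h)$, and concludes by recursion. Your handling of the cross terms (self-adjoint, commuting functions of $\Lambda_h$, plus independence and zero mean of $\Delta W^n$) and the trace identity $\norm{\mathcal{P}_hQ^{1/2}}_{\mathrm{HS}}^2=\mathrm{Tr}(\mathcal{P}_hQ\mathcal{P}_h)$ matches the paper's argument in all essentials.
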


\begin{proof}
The stochastic part of the method can be written as an 
It\^o integral and we obtain due to the It\^o isometry
\begin{align*}
\mathbb{E}\Big[\norm{S_h(k)\mathcal{P}_h\Delta W^{n-1}}_{L_2(\mathcal{D})}^2\Big] 
& =  \mathbb{E}\Big[\Norm{\int_{t_{n-1}}^{t_n}S_h(k)\mathcal{P}_h\,\mathrm{d}W(s)}_{L_2(\mathcal{D})}^2\Big] \\
& = \displaystyle\int_{t_{n-1}}^{t_n}\norm{S_h(k)\mathcal{P}_hQ^{1/2}}_{\mathrm{HS}}^2\,\mathrm{d}s.
\end{align*}
Similarly to the proof of Theorem~\ref{driftexact} we thus get
\begin{align*}
\mathbb{E}\Big[\frac{1}{2}\big(\norm{\Lambda_h^{1/2} U_1^n\big}_{L_2(\mathcal{D})}^2 + 
\norm{U_2^n}_{L_2(\mathcal{D})}^2\big)\Big] 
& = \mathbb{E}\Big[\frac{1}{2}\big(\norm{\Lambda_h^{1/2} U_1^{n-1}}_{L_2(\mathcal{D})}^2 + 
\norm{U_2^{n-1}}_{L_2(\mathcal{D})}^2\big)\Big] \\
& \quad+  \frac{k}{2}\mathrm{Tr}(\mathcal{P}_hQ\mathcal{P}_h).
\end{align*}
A recursion now concludes the proof.
\end{proof} 

To conclude this section, we would like to remark that already 
for stochastic ordinary differential equations, the growth rate of the expected 
energy along the numerical solutions given by the forward (or backward) 
Euler-Maruyama scheme and the midpoint rule, see \cite{c10} and references 
therein, is not correct. Indeed, for the forward Euler-Maruyama scheme, one has 
an exponential drift in the expected value of the energy. 

\section{Numerical examples}\label{sect:numexp}

Let us consider the example given in \cite{kls}:
\begin{equation}\label{example}
  \begin{aligned}
&\mathrm{d}\dot{u} - \Delta u\, \mathrm{d}t = \mathrm{d}W, && \quad (x,t) \in \ (0,1)\times(0,1), \\
&u(0,t)=u(1,t) = 0, &&\quad t\in(0,1), \\
&u(x,0) = \cos(\pi(x-1/2)), \ \dot u(x,0)=0, &&\quad x \in(0,1).
\end{aligned}
\end{equation}
The solution of this stochastic partial differential equation will now be 
numerically approximated with a finite element 
method in space and the stochastic trigonometric method \eqref{stm} in time. 
For the below numerical experiments, we will consider two kinds of noise: 
a space-time white noise with covariance operator $Q=I$ and a correlated one. 
For correlated noise we choose $Q=\Lambda^{-s}$ with $s\in \mathbb{R}$ and 
recall the relation $\beta<1+s-d/2$, where $d=1$ is the dimension 
of the problem, see the discussion after \eqref{Qassumption}. 

Before we start with our numerical experiments, 
let us briefly explain how we approximate the noise 
present in the above stochastic partial differential equation. 
From the Fourier expansion \eqref{fnoise}, we have for all $\chi\in V_h$:
$$
(\mathcal{P}_h\Delta W^n,\chi)_{L_2(\mathcal{D})}=
\sum_{j=1}^\infty\gamma_j^{1/2}\Delta\beta_j^n(e_j,\chi)_{L_2(\mathcal{D})},
$$
where $\{\gamma_j,e_j\}_{j=1}^\infty$ are the eigenpairs of 
the covariance operator $Q$ with 
orthonormal eigenvectors $\{e_j\}_{j=1}^\infty$, and 
$\{\beta_j\}_{j=1}^\infty$ are mutually 
independent standard real-valued Brownian motions with Gaussian 
increments $\Delta\beta_j^n=\beta_j(t_n)-\beta_j(t_{n-1})\sim\sqrt{k}\mathcal{N}(0,1)$. 
As explained in \cite{kls}, under some assumptions on the triangulation 
and the operator $Q$, one can approximate the above expansion 
with 
$$
(\mathcal{P}_h\Delta W^n,\chi)_{L_2(\mathcal{D})}
\approx\sum_{j=1}^J\gamma_j^{1/2}\Delta\beta_j^n(e_j,\chi)_{L_2(\mathcal{D})},
$$
with an integer $J\geq N_h$, where $N_h=\text{dim}(V_h)$, 
while retaining the convergence rate, to obtain the semidiscrete solution, see \eqref{exactsolfem}, 
$$
X_h^J(t)=E_h(t)X_{h,0}+\sum_{j=1}^J\gamma_j^{1/2}\int_0^tE_h(t-s)\mathcal{P}_hBe_j\,
\mathrm d\beta_j(s).
$$
Figure~\ref{fig:errorh} confirms the results on the spatial 
discretisation of our linear stochastic wave equation 
stated in Theorem~\ref{thm:fem}. The spatial errors in the first component 
of our problem are displayed for various values of the parameter $s$. 
On the one hand we consider a space-time white noise with $Q=I$, and hence $\beta<1/2$, 
and on the other hand, different correlated noises with $Q=\Lambda^{-s}$, 
i.\,e., $\beta < 1/2+s$. The corresponding convergence rates are observed. 
Here, we simulate the exact solution with the numerical one  
using a very small step size, i.\,e.,  
$k_{\mathrm{exact}}=h_{\mathrm{exact}}=2^{-8}$. 
The expected values are approximated by computing 
averages over $M=100$ samples. 
All the numerical experiments were performed 
in Matlab using specially designed software 
and the random numbers were generated with 
the command \verb+randn('state',100)+. 

\begin{figure}
\begin{center}
\includegraphics*[height=7cm,keepaspectratio]
{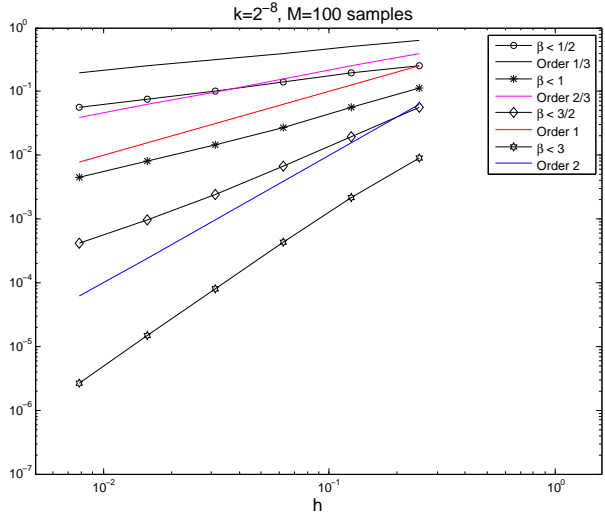}
\caption{Spatial errors: The $L_2$-error in the first component 
decreases with order $h^{\frac23\beta}$.}
\label{fig:errorh}
\end{center}
\end{figure}

We are now interested in the time-discretisation of the above 
stochastic wave equation for various spatial meshes. 
Figure~\ref{fig:errork} displays 
the strong error at time $t=1$ 
in the first component of the solution 
for space-time white noise with $s=0$ and 
for correlated noise with $s=1/2$, respectively. 
One observes the order of convergence stated in Theorem~\ref{thm:stm} 
and the fact that these errors are independent of the spatial discretisation. 
Again, the exact solution is approximated by the stochastic trigonometric method 
with a very small step size $k_{\mathrm{exact}}=2^{-6}$. 
We use $h_{\mathrm{exact}}=2^{-9}, 2^{-10}$, resp., $2^{-11}$ 
for the spatial discretisations. Again $M=100$ samples are used 
for the approximation of the expected values. 

\begin{figure}
\begin{center}
\includegraphics*[height=7cm,keepaspectratio]
{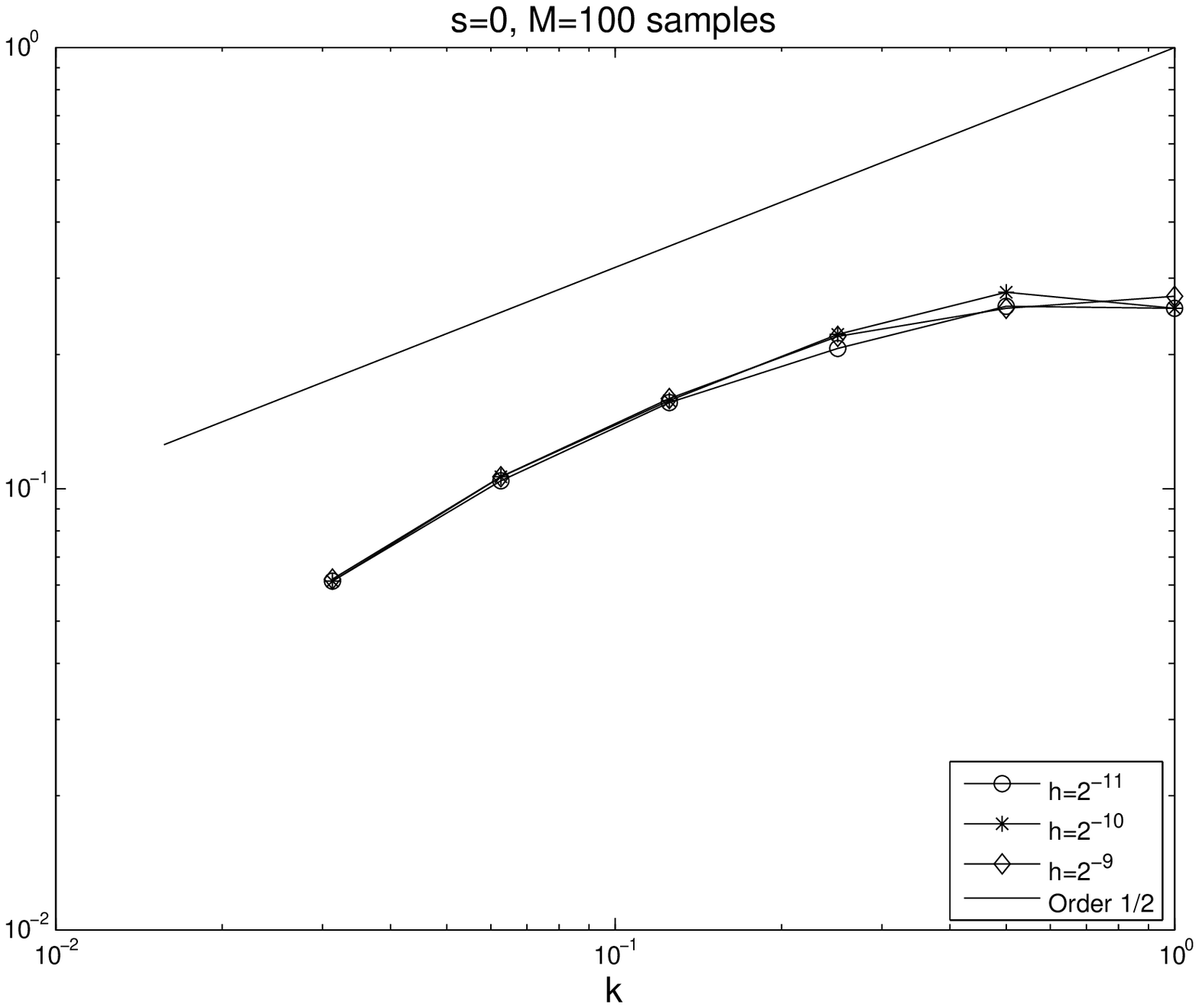}
\includegraphics*[height=7cm,keepaspectratio]
{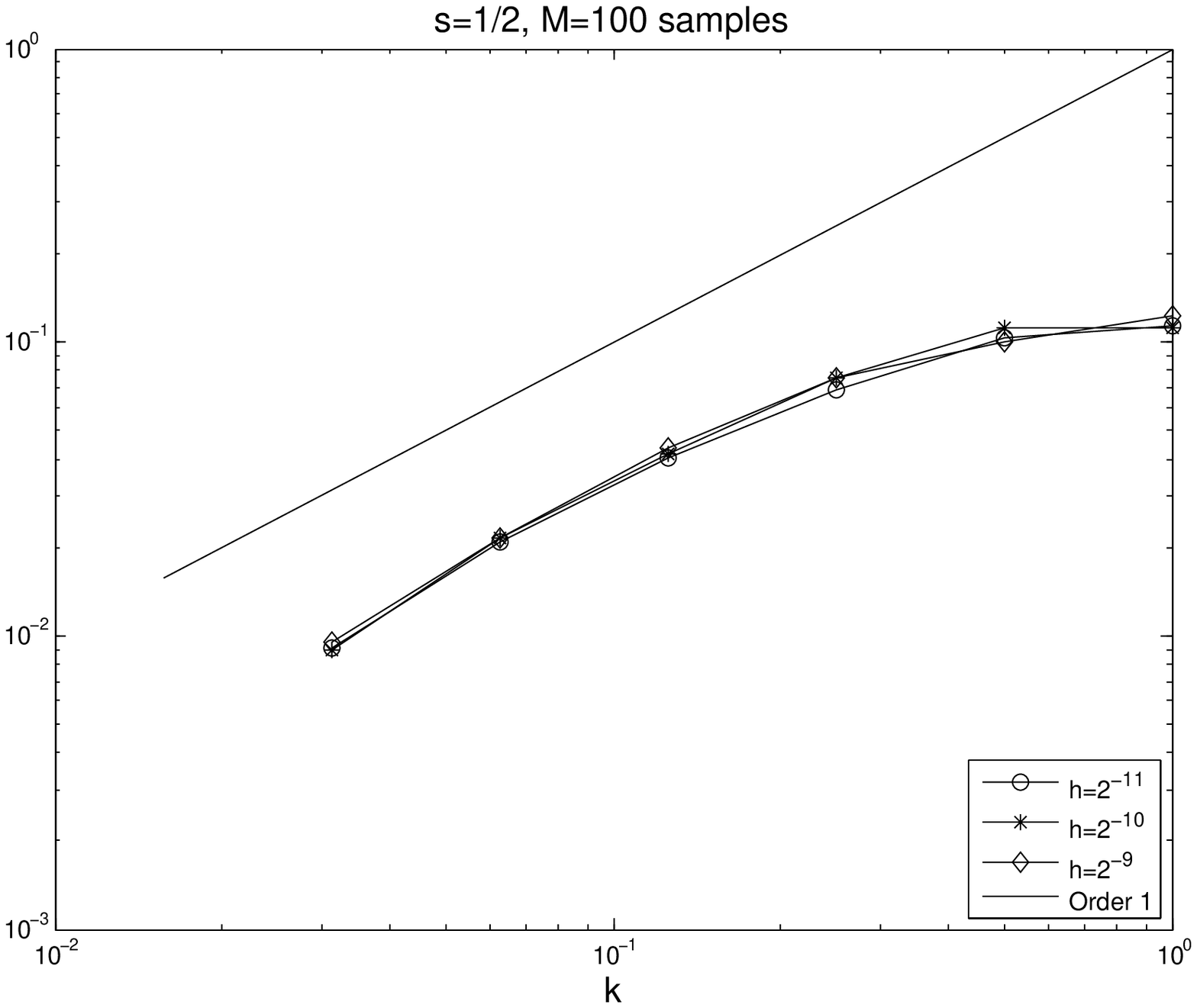}
\caption{Temporal errors: The $L_2$-error in the first component 
decreases with order $k^\beta$ and is independent of the mesh-grid $h$.} 
\label{fig:errork}
\end{center}
\end{figure}

Next, we compare our time integrator with the following 
classical numerical schemes for stochastic differential equations. 
When applied to the wave equation in the form \eqref{swe2}, these schemes 
are: 
\begin{enumerate}
\item The backward Euler-Maruyama scheme 
$X^{n+1}=X^n+kAX^{n+1}+B\Delta W^n$, 
see for example \cite{kp92} or \cite{mt04}. 
The strong rate of convergence for this method is $\bigo{k^{\min(\beta/2,1)}}$, 
see \cite[Theorem 4.12]{KLLweaktime}.
\item A stochastic version of the St\"ormer-Verlet scheme, 
writing $X=[X_1,X_2]^T$,  
\begin{equation*}
\begin{split}
X^{n+1/2}_2 & = X^n_2 + \frac{k}{2}\Lambda X^n_1 + W(t_{n+1/2})-W(t_n), \\
X^{n+1}_1 & = X^n_1 + kX^{n+1/2}_2, \\
X^{n+1}_2 & = X^{n+1/2}_2 + \frac{k}{2}\Lambda X^{n+1}_1 + W(t_{n+1})-W(t_{n+1/2}).
\end{split}
\end{equation*}
For an application of this scheme to the Langevin equation, we refer  
to \cite{reich}. We were not able to find any references on 
the strong rate of convergence of this numerical method. 
\item The Crank-Nicolson-Maruyama scheme \cite{h03}
$$
X^{n+1}=X^{n}+\frac k2 A(X^{n+1}+X^{n})+B\Delta W^n.
$$
The strong rate of convergence is $\bigo{k^{ \min(2\beta/3,1)}}$, 
see \cite[Theorem 4.12]{KLLweaktime}.
\end{enumerate}
We apply these schemes to the finite element approximation of the linear 
problem \eqref{example} with truncated noise. Note that both 
the backward Euler-Maruyama scheme and the Crank-Nicolson-Maruyama scheme 
are implicit. 
Figure~\ref{fig:trigo_bem_sv} presents the various strong convergence rates 
of the above numerical integrators, once with white noise 
and once with correlated noise with $Q=\Lambda^{-1/2}$. 
One observes that the numerical solution 
given by the St\"ormer-Verlet method 
explodes for larger values of the step-size $k$ 
(this computation was stopped when the deterministic 
non-stable regime of the scheme was attained). For all the experiments 
we use $h_{\mathrm{exact}}=2^{-10}$ for the spatial discretisation. 
The reference solution is computed using the stochastic trigonometric 
method with the step size $k_{\mathrm{exact}}=2^{-16}$. 
Again $M=100$ samples are used. 

\begin{figure}
\begin{center}
\includegraphics*[height=7cm,keepaspectratio]
{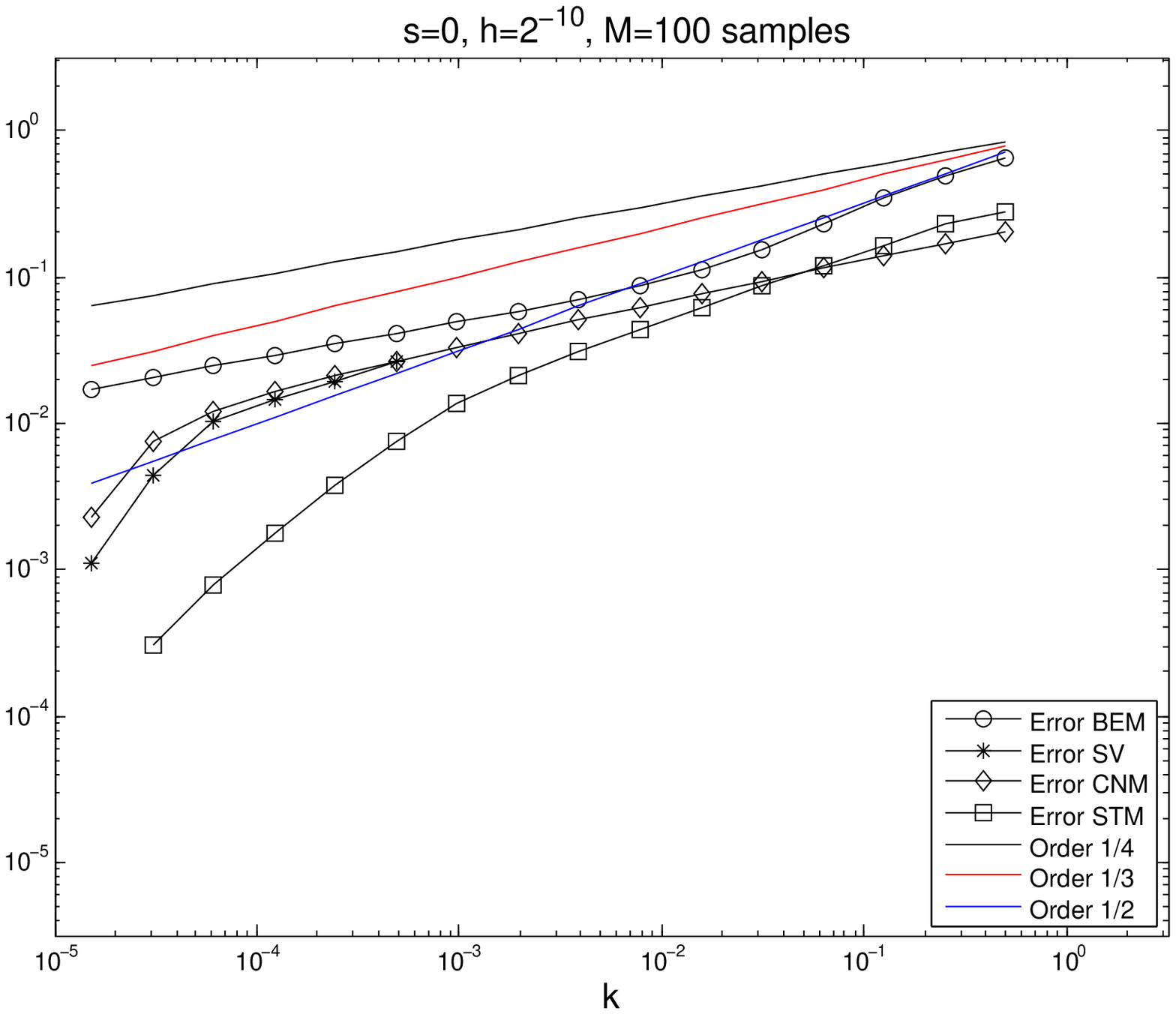}
\includegraphics*[height=7cm,keepaspectratio]
{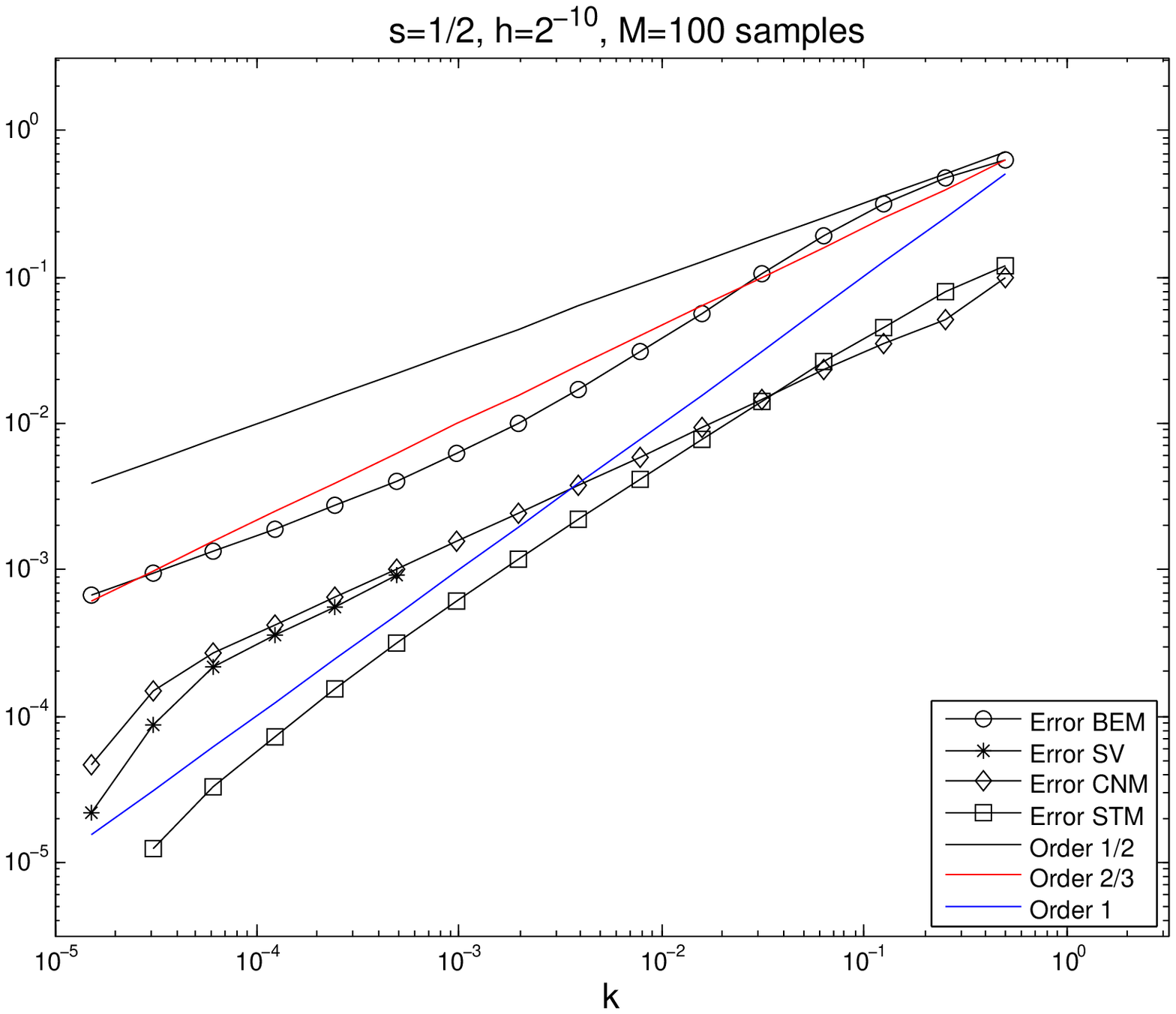}
\caption{$L_2$-error in the first component of the numerical solutions   
given by the St\"ormer-Verlet method (SV), the backward Euler-Maruyama scheme (BEM), 
the Crank-Nicolson-Maruyama scheme (CNM) and the stochastic trigonometric method (STM).} 
\label{fig:trigo_bem_sv}
\end{center}
\end{figure}

In the following numerical experiment, we are 
concerned with the trace formula of Section~\ref{sect:trace}. 
Figure~\ref{fig:energy} illustrates the trace formula of the numerical 
solution. Here, we choose $s=1/2$ and hence $\beta<1$ and display the expected 
value of the energy along the numerical solution 
of the above stochastic linear wave equation  
with mesh grids $h=0.1$ and $k=0.1$ 
on the long time interval $[0,500]$. We took $M=15000$ samples 
to approximate the expected energy of our problem. 
A comparison with other time integrators is presented 
in Figure~\ref{fig:energy2}. One notes that all these 
numerical schemes do not 
reproduce the linear growth of the expected 
energy correctly. This fact is already known for 
the backward Euler-Maruyama scheme applied to a finite-dimensional 
linear stochastic oscillator \cite{hig04}.

\begin{figure}
\begin{center}
\includegraphics*[height=7cm,keepaspectratio]
{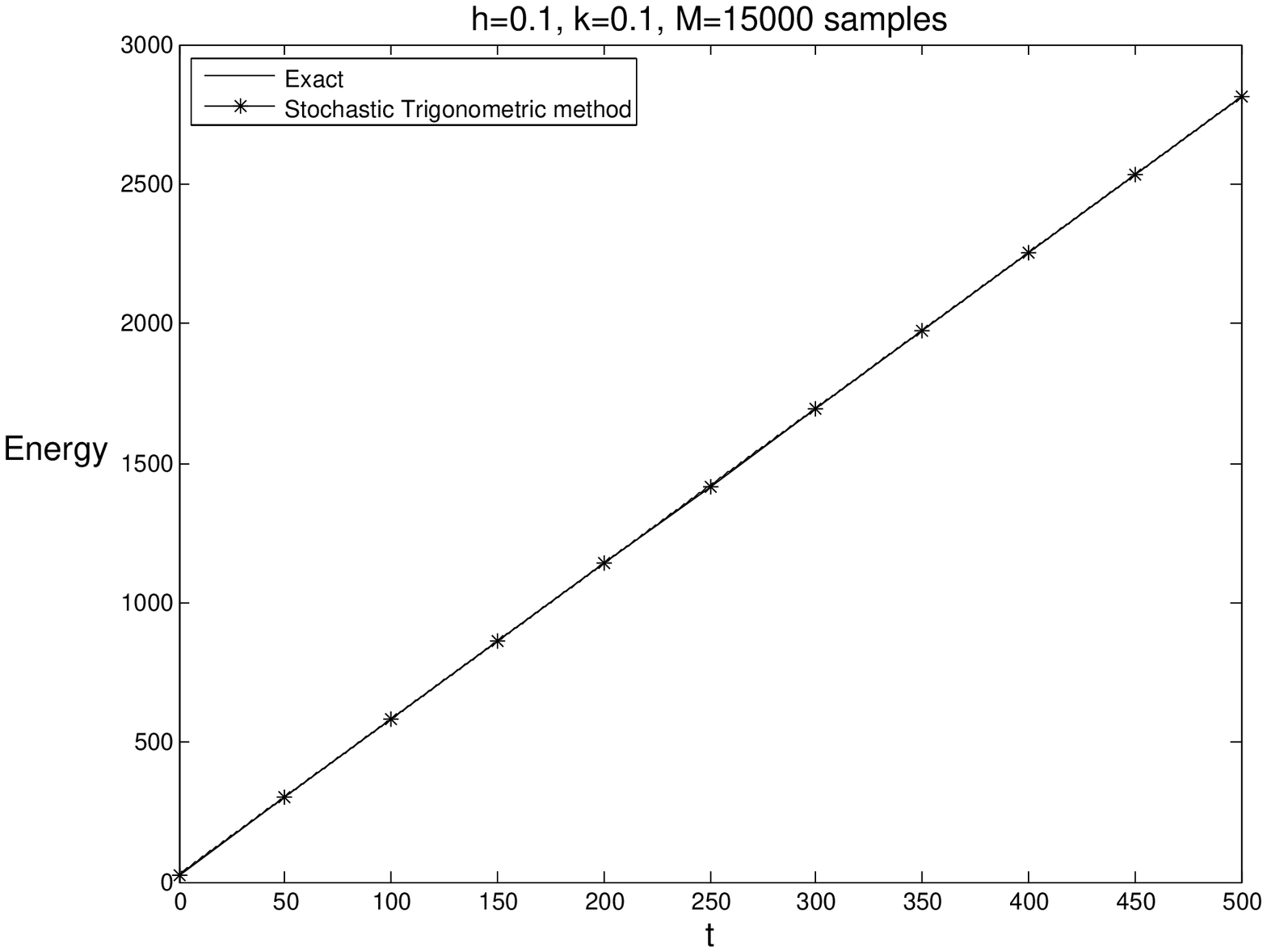}
\caption{Trace-formula: The stochastic trigonometric method preserves 
exactly the linear growth of the expected value of the energy.}
\label{fig:energy}
\end{center}
\end{figure}

\begin{figure}
\begin{center}
\includegraphics*[height=7cm,keepaspectratio]
{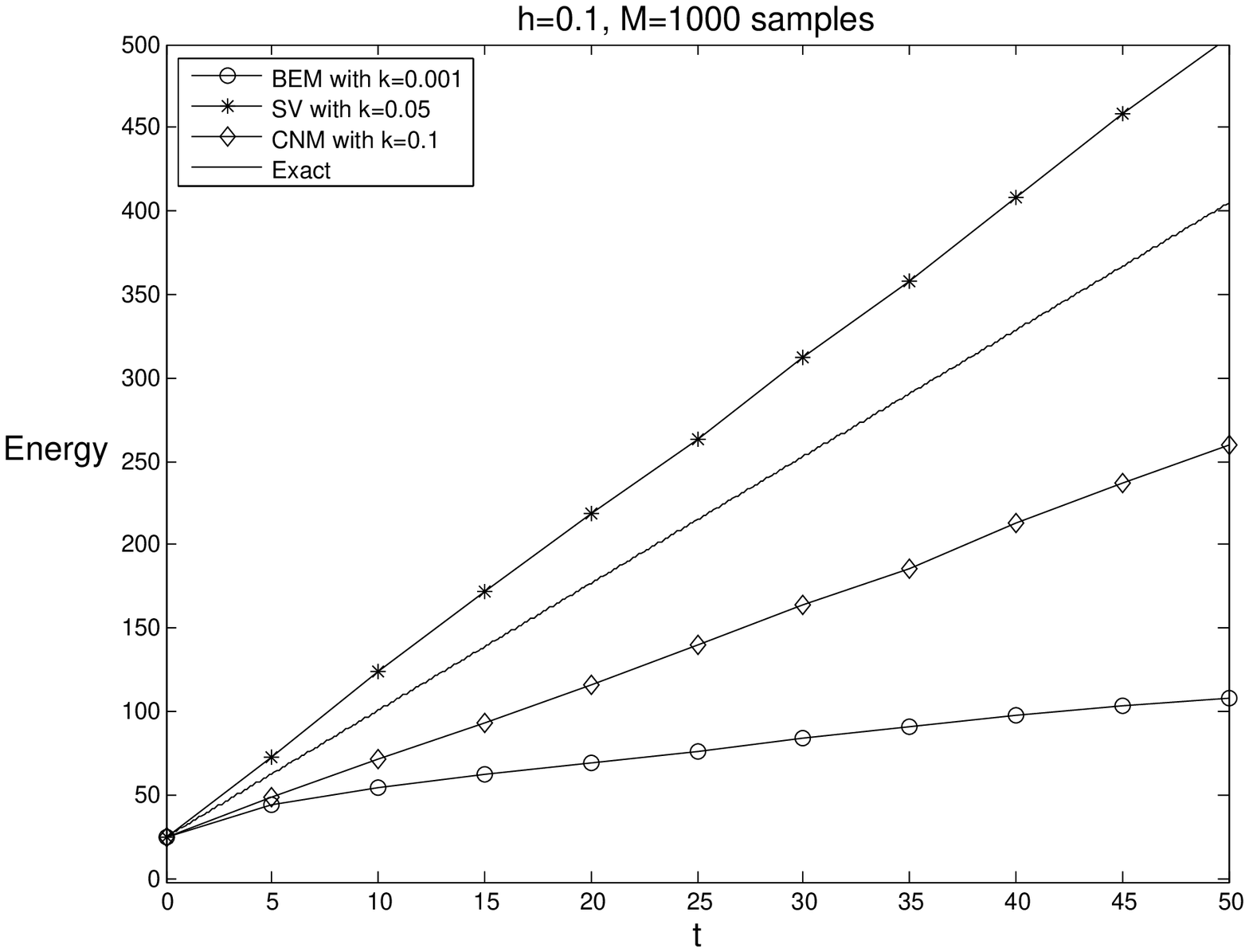}
\caption{Although using a small time step size, 
the backward Euler-Maruyama scheme (BEM) does not reproduce 
the linear growth of the expected energy. 
The St\"ormer-Verlet method (SV) and the Crank-Nicolson-Maruyama scheme (CNM) 
yield better results even with a larger time step size.}
\label{fig:energy2}
\end{center}
\end{figure}

Finally we consider a nonlinear stochastic wave equation, 
the Sine-Gordon equation driven by additive noise:
\begin{equation*}
\begin{aligned}
&\mathrm{d}\dot{u} - \Delta u\, \mathrm{d}t = -\sin(u)\, \mathrm{d}t+\mathrm{d}W, &&\quad (x,t) \in \ (0,1)\times(0,1), \\
&u(0,t)=u(1,t) = 0, &&\quad t\in(0,1), \\
&u(x,0) = 0, \ \dot u(x,0)=1_{[\frac{1}{4},\frac{3}{4}]}(x), &&\quad x \in(0,1),
\end{aligned}
\end{equation*}
where $1_I (x)$ denotes the indicator function for the interval $I$. 
The corresponding deterministic problem is studied for example in \cite{grimm}. 
We solve this problem again with a finite element method in space and in time 
we use the stochastic trigonometric method \eqref{trigo-nl} with $G(X(t))=-\sin(X(t))$ 
and the filter functions proposed in \cite{grho}:
\begin{equation*}
\psi(\xi) = \sinc^3(\xi), \ \ \phi(\xi) = \sinc(\xi), \ \ \psi_0(\xi) = \cos(\xi)\sinc^2(\xi), 
\ \ \psi_1(\xi) = \sinc^2(\xi), 
\end{equation*}
where $\sinc(\xi)=\sin(\xi)/\xi$. 
In the upper plot of Figure~\ref{fig:sine_gordon}, we show 
the expected energy of the numerical solution 
of the Sine-Gordon equation where the covariance operator 
is given by $Q=I$. 
Even for a large step-size $k=0.1$, one can observe 
the good behaviour of the numerical scheme. In the lower figure,   
we display the convergence rate for the first component 
with a covariance operator $Q=\Lambda^{-1}$.  
Again, we approximate the exact solution with a finite 
element solution and the stochastic trigonometric scheme 
using $k_{\mathrm{exact}}=2^{-6}$ and $h_{\mathrm{exact}}=2^{-9}$. 

\begin{figure}
\begin{center}
\includegraphics*[height=7cm,keepaspectratio]
{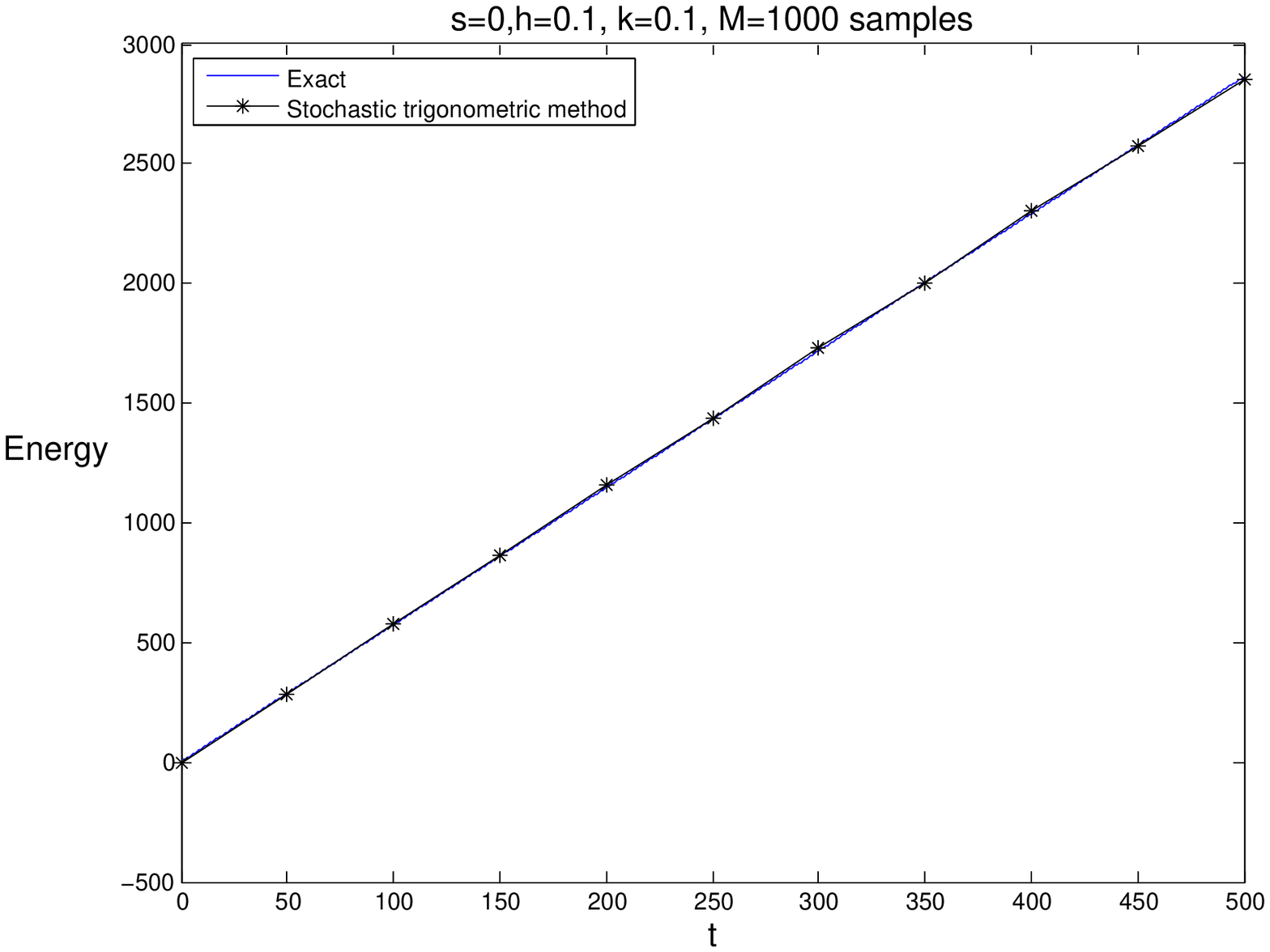}
\includegraphics*[height=7cm,keepaspectratio]
{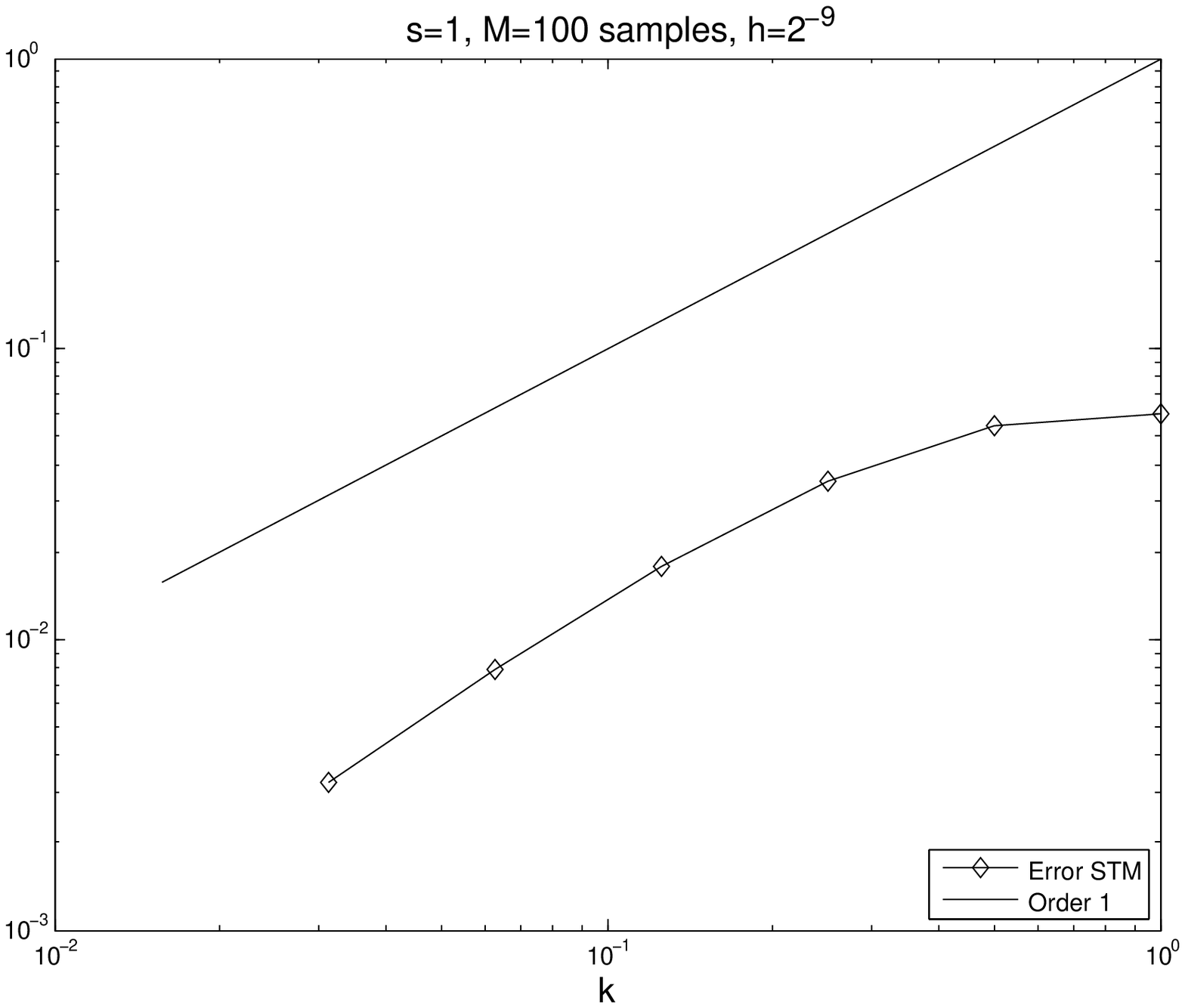}
\caption{In the nonlinear case, the stochastic trigonometric method preserves 
almost exactly the linear growth of the expected value of the energy (above figure). 
The $L_2$-error in the first component of the numerical solution given by the stochastic 
trigonometric method decreases with order $1$.}
\label{fig:sine_gordon}
\end{center}
\end{figure}

\bibliographystyle{siam}
\bibliography{biblio}


\end{document}